\newtheorem{thm}{Theorem}[section]
\newtheorem{cor}[thm]{Corollary}
\newtheorem{lem}[thm]{Lemma}
\newtheorem{prop}[thm]{Proposition}
\theoremstyle{definition}
\newtheorem{defn}[thm]{Definition}
\theoremstyle{remark}
\newtheorem{rem}[thm]{Remark}
\numberwithin{equation}{section}
\newcommand{\R}{\mathbb R}
\newcommand{\eps}{\epsilon}
\newcommand{\p}{\partial}
\newcommand{\comment}[1]{}
\begin{document}

\title[One-phase almost minimizers]{Almost minimizers of the one-phase free boundary problem}
\author{D. De Silva}
\address{Department of Mathematics, Barnard College, Columbia University, New York, NY 10027}
\email{\tt  desilva@math.columbia.edu}
\author{O. Savin}
\address{Department of Mathematics, Columbia University, New York, NY 10027}\email{\tt  savin@math.columbia.edu}
\begin{abstract}We consider almost minimizers to the one-phase energy functional and we prove their optimal Lipschitz regularity and partial regularity of their free boundary. These results were recently obtained by David and Toro \cite{DaT}, and David, Engelstein, and Toro \cite{DaET}. Our proofs provide a different method based on a non-infinitesimal notion of viscosity solutions that we introduced in \cite{DS}. \end{abstract}

%\thanks{}
%\subjclass{}%
%\keywords{One-phase free boundary problem; Harnack Inequality}
\maketitle
\section{Introduction}
This note is concerned with almost-minimizers of the classical one-phase (Bernoulli) energy functional,
\begin{equation}\label{J}
J(u, \Omega):= \int_{\Omega} (|\nabla u|^2 + \chi_{\{u>0\}}) \ dx,
\end{equation}
with $\Omega$ a bounded domain in $\R^n$ and $u \geq 0.$

Minimizers of $J$ were first investigated systematically by Alt and Caffarelli. Two fundamental questions are answered in the pioneer article \cite{AC}, that is the Lipschitz regularity of minimizers and the regularity of ``flat" free boundaries, which in turns gives the almost-everywhere regularity of minimizing free boundaries. The viscosity approach to the associated free boundary problem was later developed by Caffarelli in \cite{C1,C2,C3}. In particular in \cite{C2} the regularity of Lipschitz free boundaries is obtained. There is a wide literature on this problem and the corresponding two-phase problem, and we refer the reader to the paper \cite{DFS} for a comprehensive survey. 

Almost minimizers of $J$ were investigated recently in \cite{DaT, DaET}.
In \cite{DaT} the authors obtained local Lipschitz continuity  of almost minimizers in the more general case of a two-phase energy functional. Later, in \cite{DaET} the authors proved uniform rectifiability of the free boundary, and in the purely one-phase case they showed that the free boundary is $C^{1,\alpha}$ almost-everywhere. Thus the pioneer results in \cite{AC} have been extended to the context of almost minimizers.

Our purpose here is to provide a different approach, based on non-variational techniques, to study  almost minimizers of $J$ and their free boundaries. %\begin{equation}\label{J}
%J(u, \Omega):= \int_{\Omega} (|\nabla u|^2 + \chi_{\{u>0\}}) \ dx, \quad \Omega \subset \R^n,
%\end{equation}
Our strategy is inspired by our recent work \cite{DS} in which we develop a Harnack type inequality for functions that do not necessarily satisfy an infinitesimal equation but rather exhibit a two-scale behavior. As an application, we provide in \cite{DS} the $C^{1,\alpha}$ estimates of Almgren and Tamanini \cite{A, T} for quasi-minimizers of the perimeter functional, and in \cite{DS1} of the {\it thin} one-phase functional. We follow here the same approach, by showing that almost minimizers of $J$ are ``viscosity solutions" in this more general sense. Roughly, our viscosity solutions satisfy comparison in a neighborhood of a touching point whose size depends on the properties of the test functions (see subsection 4.1). Once this is established, we employ the techniques developed by the first author in \cite{D} to study the regularity of the free boundary of viscosity solutions.

Our first main theorem about the optimal Lipschitz regularity of almost minimizers reads as follows. We refer to Section 2 for the precise definition of almost minimizers (with constant $\kappa$ and exponent $\beta.$)

\begin{thm}\label{LIP} Let $u$ be an almost minimizer for $J$ in $B_1$ (with constant $\kappa$ and exponent $\beta$.) Then
 $$\|\nabla u\|_{L^\infty(B_{1/2})} \le C (\|u\|_{H^1(B_1)} +1)$$ for some constant $C$ depending on $\kappa$, $\beta$ and $n$.
 Moreover, $u$ is uniformly Lipschitz continuous in a neighborhood of $\{u=0\}$, that is if $u(0)=0$ then
$$|\nabla u| \le C(n), \quad \mbox{in $B_{r_0}$},$$
 for some $r_0$ depending on $\kappa, \beta, n$ and $\|u\|_{H^1}$.
\end{thm}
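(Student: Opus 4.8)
\medskip
\noindent\emph{Proof strategy.}
\smallskip

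The plan is to run the classical Alt--Caffarelli comparison scheme, keeping track at every scale of the error term produced by almost-minimality. For $B_r(x_0)\Subset B_1$ write $\phi(x_0,r):=\int_{B_r(x_0)}|\nabla u|^2$, and let $h$ be the harmonic function in $B_r(x_0)$ with $h=u$ on $\partial B_r(x_0)$; since $u\ge 0$ forces $h\ge 0$, both $h$ and $\min(u,h)$ are admissible competitors. Testing almost-minimality against $h$ and using $\int_{B_r(x_0)}|\nabla h|^2\le\phi(x_0,r)$, $|\{h>0\}\cap B_r(x_0)|\le\omega_n r^n$ and the orthogonality $\int\nabla h\cdot\nabla(u-h)=0$, one gets the \emph{almost-harmonicity estimate}
$$\int_{B_r(x_0)}|\nabla(u-h)|^2\ \le\ \kappa\,r^{\beta}\,\phi(x_0,r)\ +\ C_n\,r^{n},$$
while testing against $\min(u,h)$, which has the same positivity set as $u$ because $h>0$ in $B_r(x_0)$, gives the \emph{almost-subharmonicity estimate} $\int_{B_r(x_0)}|\nabla(u-h)^+|^2\le\kappa r^\beta J(u,B_r(x_0))$. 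Combining the first estimate with the decay of $|B_\rho|^{-1}\int_{B_\rho}|\nabla h|^2$ for harmonic $h$ (monotone in $\rho$, since $|\nabla h|^2$ is subharmonic) produces an iteration inequality which by a standard argument yields, for every $\gamma>0$, a radius $r_1=r_1(\kappa,\beta,\gamma,n)$ and a constant $C$ with
$$\int_{B_\rho(x_0)}|\nabla u|^2\ \le\ C\big(\|u\|_{H^1(B_1)}^2+1\big)\,\rho^{\,n-\gamma}\qquad\bigl(x_0\in B_{1/2},\ \rho\le r_1\bigr).$$
In particular $u\in C^{0,\alpha}_{\mathrm{loc}}(B_1)$ for all $\alpha<1$, $\|u\|_{L^\infty(B_{3/4})}\le C(\|u\|_{H^1(B_1)}+1)$, and, refining the almost-harmonicity estimate by subtracting affine functions (a Campanato iteration, available wherever $u>0$ since the bulk term is locally constant there, hence contributes no $r^n$ error), $u\in C^{1,\alpha}(\{u>0\}\cap B_{3/4})$ with $|\nabla u(x_0)|\le C_n\bigl(d^{-1}\|u\|_{L^\infty(B_{2d}(x_0))}+1\bigr)$, $d:=\operatorname{dist}(x_0,\{u=0\})$.

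The core of the argument is the linear growth of $u$ at free boundary points: there are a dimensional constant $C_n$ and a radius $r_0=r_0(\kappa,\beta,n,\|u\|_{H^1(B_1)})$ such that if $x_0\in B_{1/2}$ and $u(x_0)=0$ then $\sup_{B_r(x_0)}u\le C_n r$ for all $r\le r_0$. One runs an Alt--Caffarelli type competitor argument. First, the almost-subharmonicity estimate together with a Caccioppoli inequality, an interpolation, and the preliminary bound above shows $u\le h+w$ in $B_{2r}(x_0)$ with $\|w\|_{L^\infty(B_r(x_0))}\le\delta(r)\bigl(\sup_{B_{4r}(x_0)}u+r\bigr)$ and $\delta(r)\to0$ as $r\to0$, so $\sup_{B_r(x_0)}u\le C_n\,\fint_{\partial B_{2r}(x_0)}u+\delta(r)(\dots)$. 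Second, replacing $u$ on $B_r(x_0)$ by a function that vanishes on $B_{r/2}(x_0)$ and is harmonic on the annulus, and using that erasing the positivity set of $u$ inside $B_{r/2}(x_0)$ produces a definite gain $\ge c_n r^n$ in the bulk term (a density estimate at the free boundary point $x_0$, itself proved by a competitor), one bounds $\fint_{\partial B_{2r}(x_0)}u\le C_n r$ --- \emph{provided} the almost-minimality error $\kappa r^\beta J(u,B_r(x_0))$ is swamped, which by the preliminary estimate occurs once $\kappa(\|u\|_{H^1(B_1)}^2+1)r^{\beta-\gamma}$ drops below a dimensional threshold, i.e. for $r\le r_0$. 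Feeding these two facts into a dyadic iteration of $N(r):=r^{-1}\sup_{B_r(x_0)}u$ --- which starts from the (large, norm-dependent) value $N(r_0)\le C(\|u\|_{H^1(B_1)}+1)/r_0$ and contracts toward $C_n$ because $\delta\to0$ --- gives $N(r)\le C_n$ below a (smaller, still norm-dependent) threshold, which is the asserted linear growth.

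With these in hand the theorem follows. For the second (sharp) statement: if $u(0)=0$ then, for $x\in B_{r_0/4}$ with $u(x)>0$, choosing a free boundary point $z$ with $|x-z|=d=\operatorname{dist}(x,\{u=0\})\le r_0/4$, linear growth gives $\|u\|_{L^\infty(B_{2d}(x))}\le\sup_{B_{3d}(z)}u\le C_n d$, and the interior estimate then gives $|\nabla u(x)|\le C_n$; for $x\in\{u=0\}\cap B_{r_0/4}$ the bound $|\nabla u(x)|\le C_n$ is immediate from linear growth. For the first statement one combines the interior $C^{1,\alpha}$ estimate at points lying at definite distance from $\{u=0\}$ (using $\|u\|_{L^\infty(B_{3/4})}\le C(\|u\|_{H^1}+1)$) with the sharp bound near $\{u=0\}$; the linear dependence on $\|u\|_{H^1(B_1)}$ comes out after the rescaling $u\mapsto u/(\|u\|_{H^1(B_1)}+1)$, which turns $u$ into an almost-minimizer of $\int|\nabla w|^2+\lambda^2\chi_{\{w>0\}}$ with $\lambda\le 1$ and $\|w\|_{H^1(B_1)}\le 1$, for which the same scheme yields $\|\nabla w\|_{L^\infty(B_{1/2})}\le C$.

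I expect the linear growth step to be the main obstacle. Unlike for genuine minimizers there is neither an exact equation in $\{u>0\}$ nor an exact free boundary condition, so both comparisons must be made quantitative and the error $\kappa r^\beta J(u,B_r(x_0))$ must be dominated; since only the non-sharp, $\|u\|_{H^1}$-dependent gradient bound is available at that point, the threshold radius $r_0$ is unavoidably forced to depend on $\|u\|_{H^1(B_1)}$, exactly as in the statement. One also needs, along the way, the density and non-degeneracy estimates for the free boundary of almost-minimizers, again obtained by competitor comparisons that absorb the $\kappa r^\beta$ error.
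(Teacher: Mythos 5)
Your proposal reconstructs the classical Alt--Caffarelli scheme (as used also by David--Toro for almost-minimizers): decay estimates for $\int_{B_r}|\nabla u|^2$ via harmonic replacement, interior $C^{1,\alpha}$ estimates in $\{u>0\}$ via a Campanato iteration, and then the crucial linear growth estimate $\sup_{B_r(x_0)}u\le C_n r$ at free boundary points, obtained by a competitor that erases the positivity set in an inner ball together with a density estimate. This is a legitimate route and in spirit tracks how the David--Toro proof works, but it is genuinely different from the one in the paper and substantially heavier in its prerequisites. The paper's proof never invokes linear growth, density, or non-degeneracy at the free boundary (those are established only afterwards, in Section~3, once Lipschitz continuity is already known). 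Instead, it runs a \emph{dichotomy iteration} entirely on the quantity $a(\tau)=(\fint_{B_\tau}|\nabla u|^2)^{1/2}$: at each scale either $a$ drops by a fixed factor (Proposition~\ref{dich}), or $\nabla u$ is $\eps$-close to a constant vector $q$ with $|q|\sim a$, in which case Lemma~\ref{iterate} (whose heart is the Poincar\'e--Sobolev estimate $|\{u=0\}\cap B_{1/2}|\le C\eps^{2+\delta}$) can be iterated into a full $C^{1,\beta/2}$ estimate (Corollary~\ref{c2}), immediately bounding $|\nabla u|$. The observation that makes the near-free-boundary case work is that the second alternative forces $u>0$ (Remark~\ref{r2}), so if $u(0)=0$ one is always in the decaying alternative and $a(\eta^k)$ stays bounded by a dimensional constant; this yields uniform Lipschitz continuity near $\{u=0\}$ without any free boundary competitor argument. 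What your route buys is familiarity and a direct link to $\mathcal H^{n-1}$-density information; what the paper's route buys is a much shorter, self-contained argument that decouples Lipschitz regularity from the nondegeneracy machinery. One caveat on your write-up: the step ``$\fint_{\partial B_{2r}}u\le C_nr$ via a competitor plus density estimate'' is the core and is left at the level of a sketch; in the almost-minimizer setting both the density estimate and the absorption of the $\kappa r^\beta J(u,B_r)$ error need to be made quantitative before the dyadic contraction of $N(r)=r^{-1}\sup_{B_r}u$ can be closed, and this is exactly where most of the technical effort in David--Toro goes.
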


Our next theorem extends the result in \cite{C1} concerning the regularity of the free boundary
$$F(u):=\p \{u>0\} \cap B_1, $$ to the context of almost minimizers. Precisely, we prove an improvement of flatness theorem (see Theorem \ref{flat}), from which the following main regularity result follows.

\begin{thm}\label{thm_intro}
Let $u$ be an almost minimizer to $J$ in $B_1$ (with constant $\kappa$ and exponent $\beta$.) Then
$$\mathcal H^{n-1}(F(u) \cap B_{1/2}) \le C(\beta, \kappa, n),$$
and $F(u)$ is $C^{1,\alpha}$ regular outside a closed singular set of Hausdorff dimension $n-5$, for some $\alpha(\beta,n)>0$ small. 
\end{thm}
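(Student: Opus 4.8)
The plan is to follow the viscosity-solution route outlined in the introduction, reducing everything to an improvement of flatness theorem. First I would establish the basic non-degeneracy and density estimates for almost minimizers: if $u(0)=0$ then $u$ grows at least linearly, $\sup_{B_r}u \ge c\, r$, and both $\{u>0\}$ and $\{u=0\}$ occupy a definite fraction of every ball centered on $F(u)$. These follow by comparison with the minimizer data on $\partial B_r$ together with the almost-minimality error $\kappa r^{\beta}$, exactly as in the Alt--Caffarelli setting, and they already give the Hausdorff bound $\mathcal H^{n-1}(F(u)\cap B_{1/2})\le C$ via the standard Federer-type covering argument combined with the Lipschitz bound of Theorem 1.1. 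Next I would show that blow-up limits $u_r(x):=u(rx)/r$ of an almost minimizer are genuine \emph{minimizers} of $J$ (the scaling kills the error term $\kappa r^\beta \to 0$), and in particular a blow-up at a free boundary point where the free boundary is flat enough is a half-plane solution $(x\cdot e)^+$. This is the mechanism that lets us import the Alt--Caffarelli/Weiss dimension-reduction machinery to get the $\mathcal H^{n-1}$-a.e.\ regularity and the $n-5$ bound on the singular set, once $C^{1,\alpha}$ regularity of flat free boundaries is in hand.

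The heart of the matter is the improvement-of-flatness theorem (Theorem 1.3, referenced as \ref{flat}): if $u$ is an almost minimizer in $B_1$ with $0\in F(u)$ and
$$\{x\cdot e \le -\varepsilon\} \cap B_1 \subset \{u=0\}\cap B_1 \subset \{x\cdot e \le \varepsilon\}\cap B_1$$
for $\varepsilon$ small, then in a smaller ball $B_\rho$ the free boundary is trapped between two parallel hyperplanes at distance $\varepsilon \rho \cdot 2^{-1}$ (in a possibly rotated direction), so that iterating gives a $C^{1,\alpha}$ modulus. The strategy here, following \cite{D} and \cite{DS}, is the \emph{non-infinitesimal viscosity} argument: one shows that an almost minimizer is a viscosity solution in the two-scale sense of subsection 4.1, meaning it cannot be touched from above or below at a free boundary point by a strict comparison subsolution/supersolution on a ball whose radius is controlled by the curvature and gradient of the test function. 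Given this, one runs the Harnack-inequality-based compactness scheme: rescale so that the oscillation of $u$ from the flat profile is order $\varepsilon$, apply the De Silva--Savin Harnack inequality for the two-scale setting \cite{DS} to deduce that $(u - (x\cdot e)^+)/\varepsilon$ has an $\varepsilon$-independent oscillation decay, pass to a subsequential limit as $\varepsilon\to 0$ which solves a linearized problem (the Neumann-type problem $\Delta w = 0$ in $\{x_n>0\}$, $w_n=0$ on $\{x_n=0\}$), use the $C^{1,\alpha}$ estimate for that linear problem to find the improved hyperplane direction, and transfer the gain back to $u$ for $\varepsilon$ small enough.

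The main obstacle, and the place where the almost-minimality must be handled with care, is precisely in verifying the two-scale viscosity property and in making the compactness step survive the error term. Unlike genuine minimizers, an almost minimizer satisfies the free boundary condition $|\nabla u|=1$ only up to an error that decays like a power of the scale, so the comparison with smooth test functions only holds on a ball whose radius is quantitatively tied to $\beta$ and to how strictly the test function is a sub/supersolution; getting this trade-off right is exactly what the notion in subsection 4.1 is designed for, and the delicate point is that the error $\kappa r^\beta$ must be beaten by the strict inequality in the test function, which forces the neighborhood size to depend on the test function. The second delicate point is the continuity of the Lipschitz constant / nondegeneracy constant along the blow-up sequence needed for compactness — one must ensure the family of rescalings is equi-Lipschitz (this is Theorem 1.1) and uniformly nondegenerate, so that Ascoli--Arzelà produces a uniform limit and the limiting free boundaries converge in Hausdorff distance. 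Once the improvement of flatness is established and iterated to give $F(u)\in C^{1,\alpha}$ near flat points, the Hausdorff-dimension estimate on the singular set is obtained by the usual Federer dimension-reduction applied to the minimizing blow-ups, citing the known bound (dimension $n-5$) for the one-phase problem.
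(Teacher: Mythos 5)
Your proposal follows essentially the same architecture as the paper's proof: Lipschitz regularity (Theorem~\ref{LIP}), non-degeneracy and the Weiss/blow-up argument showing blow-up limits are homogeneous minimizing cones (Proposition~\ref{blowup}), the two-scale viscosity/comparison property of almost minimizers on neighborhoods whose size is tied to the strictness of the test function (Lemmas~\ref{sub_property},~\ref{sup_property}, Corollary~\ref{co_1}), and the Harnack-plus-compactness improvement-of-flatness scheme of \cite{D,DS} leading to Theorem~\ref{flat}, after which the $\mathcal{H}^{n-1}$ bound and the $n-5$ singular dimension come from the known structure of minimizing cones \cite{AC,CJK,JS} via covering and dimension reduction. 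No substantive deviation from the paper's route.
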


Our strategy also allows us to obtain $C^{1,\alpha}$ regularity of Lipschitz free boundaries via the arguments of \cite{D} (see Theorem \ref{thm2}).

The paper is organized as follows. In Section 2 we prove the optimal Lipschitz regularity for almost minimizers, then in section 3 we provide non-degeneracy properties and a compactness result. Section 4 is devoted to the partial regularity of the free boundary.

\section{Lipschitz Continuity of almost minimizers}

In this section we prove Lipschitz continuity of almost minimizers. First, we recall the definition of almost minimizers (see \cite{G} for a comprehensive treatment of almost minimizers of regular functionals of the calculus of variations.)

\begin{defn}\label{almost_E} We say that $u$ is an almost minimizer for $J$ in $\Omega$ (with constant $\kappa$ and exponent $\beta$) if $u \in H^1(\Omega)$, $u \geq 0$ a.e. in $\Omega$, and
\begin{equation}\label{almost_min}
J(u, B_r(x)) \leq (1+ \kappa r^\beta) J(v, B_r(x))
\end{equation}
for every ball $B_r(x)$ such that $\overline{B_r(x)} \subset \Omega$ and every $v \in H^1(\Omega)$ such that $v=u$ on $\p B_r(x)$ in the trace sense.
\end{defn}

Below constants depending only on $n$ are called universal. When $u$ is assumed to be an almost minimizer, then universal constants may depend on $\beta$ as well. Throughout the paper the function $u$ will be non-negative. 

Our first result is the following dichotomy. 

\begin{prop}\label{dich} Let $u \in H^1(B_1)$ and assume that
\begin{equation}\label{first}J(u,B_1) \leq (1+\sigma) J(v,B_1)\end{equation}
for all $v \in H^1(B_1)$ such that $v=u$ on $\p B_1$.
Denote by
\begin{equation}\label{a}a : = \left(\fint_{B_1} |\nabla u|^2 dx\right)^{1/2}.\end{equation}
Given $\eps>0$ small, there exist constants $\eta, M, \sigma_0>0$ (depending on $\eps$) such that if $\sigma \leq \sigma_0$ and $a \geq M$ then the following dichotomy holds. Either
\begin{equation}\label{alt1}\left(\fint_{B_{\eta}} |\nabla u|^2 dx\right)^{1/2} \leq \frac a 2, \end{equation}
or
\begin{equation}\label{alt2}\left( \fint_{B_\eta} |\nabla u - q|^2 dx\right)^{1/2} \leq \eps a,\end{equation}
with $q \in \R^n$ such that 
\begin{equation}\label{slope1}
\frac a 4 < |q| \leq C_0 a,
\end{equation}
and $C_0>0$ universal.
\end{prop}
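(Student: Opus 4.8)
My plan is to compare $u$ with its harmonic replacement on the ball and run a Caccioppoli-type estimate together with a decay estimate for harmonic functions. Let $h$ be the harmonic function in $B_1$ with $h = u$ on $\partial B_1$. Taking $v = h$ in the almost-minimality hypothesis \eqref{first} and using that $h$ minimizes the Dirichlet energy, one gets
$$\int_{B_1} |\nabla u|^2 - \int_{B_1}|\nabla h|^2 \le \sigma \, J(h, B_1) \le \sigma\left(\int_{B_1}|\nabla h|^2 + |B_1|\right) \le \sigma\left(\int_{B_1}|\nabla u|^2 + |B_1|\right).$$
Since $\int_{B_1}\nabla h \cdot \nabla(u-h) = 0$, the left side equals $\int_{B_1}|\nabla(u-h)|^2$, so
$$\fint_{B_1}|\nabla(u-h)|^2 \le \sigma\left(a^2 + C\right) \le 2\sigma a^2$$
once $a \ge M$ with $M$ large (absorbing the $1$ using $a\ge M\ge 1$). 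This is the key mechanism: the "almost" in almost-minimality costs only $\sigma a^2$, which is a small multiple of $a^2$ when $\sigma$ is small.

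Next I would exploit the regularity of the harmonic function $h$. Set $q := \nabla h(0)$. Interior estimates for harmonic functions give, for a dimensional constant, $\fint_{B_\eta}|\nabla h - q|^2 \le C_n \eta^2 \fint_{B_{1/2}}|\nabla h|^2 \le C_n \eta^2 \fint_{B_1}|\nabla u|^2 = C_n\eta^2 a^2$ (after bounding $\fint_{B_1}|\nabla h|^2 \le \fint_{B_1}|\nabla u|^2$), and also $|q| = |\nabla h(0)| \le C_0 a$ with $C_0$ universal. Combining with the Caccioppoli bound via the triangle inequality,
$$\left(\fint_{B_\eta}|\nabla u - q|^2\right)^{1/2} \le \left(\fint_{B_\eta}|\nabla u - \nabla h|^2\right)^{1/2} + \left(\fint_{B_\eta}|\nabla h - q|^2\right)^{1/2} \le \eta^{-n/2}(2\sigma)^{1/2}a + C_n^{1/2}\eta\, a.$$
Now choose $\eta$ small so that $C_n^{1/2}\eta \le \eps/2$, then $\sigma_0$ small (depending on $\eta$, hence on $\eps$) so that $\eta^{-n/2}(2\sigma_0)^{1/2} \le \eps/2$; this yields \eqref{alt2} whenever we are in the case $|q| > a/4$. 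If instead $|q| \le a/4$, I claim \eqref{alt1} holds: indeed $\fint_{B_\eta}|\nabla h|^2$ is controlled by $|q|^2$ plus the oscillation term $C_n\eta^2 a^2$, so $(\fint_{B_\eta}|\nabla h|^2)^{1/2} \le a/4 + (\eps/2)a$, and adding the $(2\sigma)^{1/2}\eta^{-n/2}a \le (\eps/2) a$ error between $u$ and $h$ gives $(\fint_{B_\eta}|\nabla u|^2)^{1/2} \le a/4 + \eps a \le a/2$ for $\eps$ small. So the dichotomy is exactly the split according to whether $|q| \le a/4$ or $|q| > a/4$.

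The one point that needs a little care — and which I expect to be the main technical nuisance rather than a deep obstacle — is making all the constants track correctly: $\eta$ and $C_0$ must be universal (and $C_0$ in particular must not depend on $\eps$), while $M$ and $\sigma_0$ are allowed to depend on $\eps$. The upper bound $|q|\le C_0 a$ and the decay estimate are both purely about the harmonic function $h$ and involve only dimensional constants, so $\eta$ (chosen from $C_n^{1/2}\eta\le\eps/2$) does depend on $\eps$ — I would instead fix $\eta$ to be universal by noting $C_n\eta^2 a^2$ contributes at most to the size of $q$, and push the smallness entirely into $\sigma_0$; alternatively one simply reads the statement as allowing $\eta$ to depend on $\eps$, which the proposition's phrasing "(depending on $\eps$)" permits for $\eta, M, \sigma_0$. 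The role of the hypothesis $a \ge M$ is solely to absorb the additive constant $|B_1|$ (equivalently the "$+1$" from $\chi_{\{u>0\}}$) into $a^2$; no other use of it is needed.
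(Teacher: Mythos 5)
Your proof follows essentially the same route as the paper's: compare $u$ with its harmonic replacement in $B_1$, deduce $\fint_{B_1}|\nabla(u-h)|^2\lesssim \sigma a^2 + C$ from almost-minimality, use interior estimates for $\nabla h$ to control both $|q|=|\nabla h(0)|\le C_0 a$ and the decay $\fint_{B_\eta}|\nabla h - q|^2\le C_1\eta^2 a^2$, and split on whether $|q|\le a/4$ or not. The parameter-choosing logic (first $\eta$, then $\sigma_0$ and $M$ depending on $\eta$, hence on $\eps$) and the identification of the role of $a\ge M$ match the paper's argument.
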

\begin{proof} Let $v$ denote the harmonic replacement of $u$ in $B_1.$ Then,
$$ \int_{B_1} |\nabla u -\nabla v|^2 \leq J(u, B_1) + \int_{B_1}( |\nabla v|^2 - 2 \nabla u \cdot \nabla v)$$
and by \eqref{first} together with the fact that 
$$\int_{B_1} \nabla v \cdot \nabla(u-v)=0$$ this gives
$$ \int_{B_1} |\nabla u -\nabla v|^2 \leq \sigma \int_{B_1} |\nabla v|^2 + C.$$ Thus, since $v$ minimizes the Dirichlet integral in $B_1$,
$$ \fint_{B_1} |\nabla u -\nabla v|^2 \leq \sigma \fint_{B_1} |\nabla u|^2 + C=\sigma a^2 +C,$$
with $C>0$ universal. 

Since $|\nabla v|^2$ is subharmonic in $B_1$ and $v$ minimizes the Dirichlet integral, we conclude that 
$$|\nabla v| \leq C_0 a \quad \text{in $B_{1/2}$},$$
with $C_0$ universal.
Thus, since $\nabla v$ is harmonic, if we denote by $q:=\nabla v(0)$, we conclude that $|q| \leq C_0a,$
and
$$\fint_{B_{\eta}} |\nabla v - q|^2  \leq C_1 a ^2\eta^2, \quad \quad \forall \, \, \eta \le 1/2,$$
with $C_1$ universal.
Thus,
\begin{equation}\label{b0}\fint_{B_{\eta}} |\nabla u - q|^2 \leq 2\sigma\eta^{-n}a^2 + 2 C_1 \eta^2 a^2 + C \eta^{-n},\end{equation}
and hence 
\begin{equation}\label{bound1}\fint_{B_{\eta}} |\nabla u |^2 \leq 4\sigma\eta^{-n}a^2 + 4 C_1 \eta^2 a^2 + 2C \eta^{-n} + 2|q|^2.\end{equation}
Now, given $\eps>0$, we can choose $\eta$ small (depending on $\eps$) and then $\sigma$ small and $a$ large depending on $\eta$, such that \begin{equation}\label{bound2}4\sigma\eta^{-n}a^2 + 4 C_1 \eta^2 a^2 + 2C \eta^{-n} \leq 2 \eps^2 a^2 \leq \frac{a^2}{8}.\end{equation}

We distinguish two cases.
If $$|q| \leq \frac a 4$$ then \eqref{bound1}-\eqref{bound2} give that
$$\left(\fint_{B_ {\eta}} |\nabla u|^2 dx\right)^{1/2}\leq \frac a 2.$$
Otherwise,
$$\frac a 4 < |q| \leq C_0 a,$$
and \eqref{b0}-\eqref{bound2} give that
$$\left(\fint_{B_{\eta}} |\nabla u - q|^2dx\right)^{1/2} \leq \eps a.$$ This concludes the proof.

\end{proof}

The next Lemma shows that alternative \eqref{alt2} can be ``improved" when $\eps$ and $\sigma$ are small enough.

\begin{lem}\label{iterate} Let $u$ be as in Proposition $\ref{dich}$, with $a \geq a_0>0$. 
Assume that 
\begin{equation}\label{small}\left(\fint_{B_1} |\nabla u -q|^2 dx\right)^{1/2} \leq \eps a\end{equation} for some $\eps >0$, and $q\in \R^n$ such that
\begin{equation}\label{slope}\frac{a}{8} < |q| \leq 2C_0 a,\end{equation} for $C_0>0$ the universal constant in Proposition $\ref{dich}$.

Given $0<\alpha<1$, there exist $\rho=\rho(\alpha)>0$, $\eps_0= \eps_0(\alpha, a_0)$, $c_0=c_0(\alpha, a_0),$ such that if $$\eps \leq \eps_0 \quad \mbox{and} \quad \sigma \leq c_0 \eps^2,$$ then
\begin{equation}\label{step1}
\left(\fint_{B_\rho} |\nabla u - \tilde{q}|^2 dx\right)^{1/2} \leq \eps \rho^{\alpha} a Poincar\ ' e-Sobolev
\end{equation}
with $\tilde q \in \R^n$ such that
\begin{equation}\label{same}|q - \tilde q| \leq \tilde C \eps a,\end{equation} for some $\tilde C>0$ universal.
\end{lem}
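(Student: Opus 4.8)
The plan is to argue by compactness and contradiction, following the improvement--of--flatness scheme of \cite{D}. Suppose the statement fails for the given $\alpha\in(0,1)$ and $a_0>0$. Then there are $u_k$ as in Proposition \ref{dich} with $\eps_k\le k^{-1}$, $\sigma_k\le k^{-1}\eps_k^2$, vectors $q_k$ with $a_k/8<|q_k|\le 2C_0a_k$ and $a_k\ge a_0$, and $\fint_{B_1}|\nabla u_k-q_k|^2\le\eps_k^2a_k^2$, but for which no admissible $\rho$ and no $\tilde q$ with $|q_k-\tilde q|\le\tilde C\eps_ka_k$ verifies \eqref{step1}. Set $\ell_k(x):=q_k\cdot x+c_k$ with $c_k:=\fint_{B_1}u_k$ and
\[
w_k:=\frac{u_k-\ell_k}{\eps_ka_k},\qquad \fint_{B_1}w_k=0,\qquad \fint_{B_1}|\nabla w_k|^2\le 1 .
\]
By Poincar\'e the $w_k$ are bounded in $H^1(B_1)$; along a subsequence $w_k\rightharpoonup w$ in $H^1(B_1)$, $w_k\to w$ in $L^2(B_1)$, $q_k/a_k\to\bar q$ with $|\bar q|\in[\tfrac18,2C_0]$, and $a_k\to\bar a\in[a_0,+\infty]$.

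Two preliminary facts are needed. First, since $\nabla u_k=0$ a.e.\ on $\{u_k=0\}$ and $|q_k|\ge a_k/8$,
\[
|\{u_k=0\}\cap B_1|=\frac1{|q_k|^2}\int_{\{u_k=0\}\cap B_1}|\nabla u_k-q_k|^2\le \frac{64|B_1|}{a_k^2}\,\eps_k^2 a_k^2\lesssim\eps_k^2 .
\]
Second, the Poincar\'e--Sobolev inequality applied to $u_k-\ell_k$ gives $\fint_{B_1}|u_k/a_k-\ell_k/a_k|^{2^*}\lesssim\eps_k^{2^*}\to0$; since $u_k\ge0$, the limiting profile $\bar\ell:=\lim\ell_k/a_k=\bar q\cdot x+\bar c$ must be $\ge0$ on $B_1$, and being affine with $|\bar q|\ge\tfrac18$ it is strictly positive on the open ball. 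Hence $\{u_k=0\}$ concentrates near the hyperplane $\{\bar\ell=0\}\subset\R^n\setminus B_1$ and leaves every compact subset of $B_1$ (using also the a priori local continuity of almost minimizers); in particular $\{u_k=0\}\cap\operatorname{supp}\phi=\emptyset$ for $k$ large, for each $\phi\in C_c^\infty(B_1)$.

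The heart of the matter is that $w$ is harmonic in $B_1$. Fix $\phi\in C_c^\infty(B_1)$ and, for $|t|$ small, use the truncated competitor $v_{k,t}:=(u_k+t\eps_ka_k\phi)_+$, which agrees with $u_k$ on $\partial B_1$. Inserting it into \eqref{first}, expanding $J$, using $\int_{B_1}\nabla\phi=0$ to replace $\int\nabla u_k\cdot\nabla\phi$ by $\eps_ka_k\int\nabla w_k\cdot\nabla\phi$, dividing by $\eps_k^2a_k^2$ and recalling $\sigma_k/\eps_k^2\le k^{-1}$, one arrives at
\[
-2t\!\int_{B_1}\!\nabla w_k\cdot\nabla\phi\ \le\ o_k(1)+t^2\|\nabla\phi\|_{L^2}^2+\frac{\big|\{v_{k,t}>0\}\setminus\{u_k>0\}\big|}{\eps_k^2a_k^2}.
\]
When $t<0$ and $\phi\ge0$ one has $\{v_{k,t}>0\}\subset\{u_k>0\}$, so the last term vanishes; letting $k\to\infty$ and then $t\to0$ gives $\int\nabla w\cdot\nabla\phi\le0$. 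When $t>0$ and $\phi\ge0$, $\{v_{k,t}>0\}\setminus\{u_k>0\}\subset\{u_k=0\}\cap\operatorname{supp}\phi$, which is empty for $k$ large by the second preliminary fact, so the same procedure gives $\int\nabla w\cdot\nabla\phi\ge0$. Thus $w$ is harmonic in $B_1$.

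Finally we run the dyadic improvement. Interior estimates give $\|w\|_{C^2(\overline{B_{1/2}})}\le C\|w\|_{L^2(B_1)}\le C(n)$, hence $\fint_{B_\rho}|\nabla w-\nabla w(0)|^2\le C(n)\rho^2$; since $\alpha<1$ we fix $\rho=\rho(\alpha)$ with $C(n)\rho^2\le\tfrac12\rho^{2\alpha}$. A Caccioppoli-type estimate for the rescaled almost--minimality (comparing with the harmonic replacement of $u_k$ in $\{u_k>0\}$, which produces only an error $o_k(1)$) upgrades the convergence to $w_k\to w$ strongly in $H^1_{\mathrm{loc}}(B_1)$, so for $k$ large, with $\tilde q_k:=q_k+\eps_ka_k\nabla w(0)$,
\[
\fint_{B_\rho}|\nabla u_k-\tilde q_k|^2=\eps_k^2a_k^2\fint_{B_\rho}|\nabla w_k-\nabla w(0)|^2\le \eps_k^2a_k^2\,\rho^{2\alpha},\qquad |\tilde q_k-q_k|\le \tilde C(n)\,\eps_ka_k ,
\]
contradicting the failure of \eqref{step1}--\eqref{same}. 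The main obstacle is precisely the harmonicity of $w$: the downward comparison is immediate, whereas the upward comparison requires showing that the jump of the volume term $|\{u>0\}|$ under the competitor is negligible at scale $\eps^2a^2$ — this is where one couples the bound $|\{u=0\}|\lesssim\eps^2$ (coming from $\nabla u=0$ on $\{u=0\}$) with the positivity of the limiting affine profile forced by $u\ge0$, which pushes the free boundary onto $\partial B_1$ and away from the support of any test function.
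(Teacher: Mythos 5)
Your compactness/blow-up argument is a genuine alternative to the paper's proof, which is direct and fully quantitative: one compares $u$ with its harmonic replacement $v$ in $B_{1/2}$, obtains $\int_{B_{1/2}}|\nabla u-\nabla v|^2\le C\sigma(a^2+1)+|\{u=0\}\cap B_{1/2}|$, bounds $|\{u=0\}\cap B_{1/2}|\le C\eps^{2+\delta}$ via Poincar\'e--Sobolev (with $\delta=4/(n-2)$), and combines this with the interior estimate $\fint_{B_\rho}|\nabla v-\tilde q|^2\le C\eps^2a^2\rho^2$ for the harmonic $v$ --- no blow-up needed. Your version organizes the same ingredients into a contradiction argument, but it has a gap that traces to precisely the place where the paper works harder.

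The measure bound you derive, $|\{u_k=0\}\cap B_1|\lesssim\eps_k^2$ (from $\nabla u_k=0$ a.e.\ on $\{u_k=0\}$ and $|q_k|\gtrsim a_k$), is not strong enough. In both the upward half of the harmonicity inequality and the strong $H^1_{\mathrm{loc}}$ upgrade you must divide $|\{u_k=0\}\cap K|$ by $\eps_k^2a_k^2$; with your bound this leaves $O(1/a_k^2)$, which is a fixed positive constant when $a_k$ stays bounded (the nontrivial case --- if $a_k\to\infty$ everything is easy). To bypass this you assert that $\{u_k=0\}$ vacates every compact subset of $B_1$, ``using also the a priori local continuity of almost minimizers.'' That is circular: Lemma \ref{iterate} is a building block in the proof of Lipschitz continuity (Theorem \ref{LIP}), its only hypothesis is the single-scale inequality \eqref{first} from Proposition \ref{dich}, and $u\in H^1$ determines $\{u=0\}$ only up to null sets, so a pointwise ``empties out of compacts'' statement is not even well posed at this stage.

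The repair is to turn your positivity observation into the quantitative measure estimate \eqref{claim}, exactly as the paper does: from $\fint_{B_1}(l^-)^2\lesssim\eps^2a^2$ and $|q|\gtrsim a$ one gets $l\ge c_1 a$ on $B_{1/2}$; Poincar\'e--Sobolev gives $\int_{B_1}|u-l|^{2^*}\lesssim(\eps a)^{2^*}$; and Chebyshev on $\{u=0\}\cap B_{1/2}$, where $|u-l|=l\ge c_1 a$, yields $|\{u=0\}\cap B_{1/2}|\lesssim\eps^{2^*}=\eps^{2+\delta}$. After dividing by $\eps_k^2a_k^2$ this is $O(\eps_k^\delta/a_k^2)=o_k(1)$, which is the rate your argument needs at both steps, and the ``empties out of compacts'' claim can simply be dropped. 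Once that is in place your proof closes --- though note that the harmonic-replacement comparison you introduce to get strong $H^1_{\mathrm{loc}}$ convergence, together with the corrected measure bound, already proves the lemma directly, making the blow-up a detour.
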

\begin{proof} Let $\bar v$ be the harmonic replacement of $u$ in $B_{1/2}$ and denote by $v$ the competitor:
$$v = \bar v \quad \text{in $B_{1/2}$} \quad v=u \quad \text{outside $B_{1/2}$}.$$
Then,
$$J(u, B_1)\leq (1+\sigma)J(v,B_1),$$
that is
$$J(u, B_{1/2}) \leq \sigma J(u, B_1 \setminus B_{1/2}) + (1+\sigma)J(v, B_{1/2}).$$
Notice that from our assumptions ($C$ universal possibly changing from line to line)
$$\fint_{B_1} |\nabla u|^2 dx \leq C a^2$$
thus we conclude that
$$\int_{B_{1/2}} (|\nabla u|^2 - |\nabla v|^2) + |\{u>0\} \cap B_{1/2}| \leq C \sigma(a^2+1) + |B_{1/2}|.$$
Since $v$ is the harmonic replacement of $u$ in $B_{1/2}$, we finally obtain
\begin{equation}\label{uvq}
\int_{B_{1/2}} |\nabla u -\nabla v|^2 dx \leq  C\sigma(a^2+1)+ |\{u=0\} \cap B_{1/2}|.
\end{equation}
We now claim that ($C_1, \delta$ universal,)
\begin{equation}\label{claim}
|\{u=0\} \cap B_{1/2}| \leq C_1 \eps^{2+\delta}.
\end{equation}
Since $v- q \cdot x$ is the harmonic replacement of $u- q \cdot x$ in $B_{1/2}$, we find that $|\nabla v - q|^2$ is subharmonic in $B_{1/2}$ and
$$|\nabla v - q| \leq \tilde C \eps a \quad \text{in $B_{1/4}$}.$$
Thus, if $\bar q$ denotes the gradient of $v- q \cdot x$ at $0$, we conclude that ($C_2$ universal)
\begin{equation}\label{l+l}\fint_{B_{\rho}} |\nabla v - (q+\bar q)|^2 \leq C_2\eps^2 a^2\rho^2, \quad \rho \leq 1/4.\end{equation}
Denote $\tilde q := q +\bar q.$ Then, 
$$|q -  \tilde q| = |\bar q| \leq \tilde C \eps a.$$
Combining \eqref{uvq}-\eqref{claim}-\eqref{l+l} we get,
\begin{equation}\label{combine}
\fint_{B_{\rho}} |\nabla u- \tilde q|^2 dx \leq \bar C \sigma(a^2+1)\rho^{-n}+ 2C_1 \eps^{2+\delta}\rho^{-n}+2C_2\eps^2 a^2 \rho^2.
\end{equation}
We conclude the proof by choosing first $\rho$ depending on $\alpha$ so that $$2C_2\eps^2 a^2 \rho^2 \leq \frac 1 4 \eps^2 a^2 \rho^{2\alpha},$$ then $\eps$ small (depending on $\rho$ and $a_0$) so that
$$2C_1 \eps^{2+\delta}\rho^{-n} \leq \frac 1 4 \eps^2 a_0^2 \rho^{2\alpha}$$
and finally $\sigma$ small depending on $\rho, a_0$ and $\eps$ so that
$$ \bar C \sigma(a^2+1)\rho^{-n} \leq \frac 1 4\eps^2 \rho^{2\alpha}(a^2+a_0^2).$$
We are left with the proof of the claim \eqref{claim}. 
We have 
$$\fint_{B_1} (u-l)dx=0,$$
where $l$ is the linear function $l(x):=b+ q \cdot x$ with $b:=\fint_{B_1} u $. 
Therefore, by Poincar\' e inequality we get that ($c$ universal)
$$c \fint_{B_1} (u-l)^2 dx \leq \fint_{B_1} |\nabla u -\nabla l|^2 dx \leq \eps^2 a^2.$$
Since $u \geq 0$, we conclude that ($C$ universal)
$$ \fint_{B_1}(l^-)^2 dx \leq C \eps^2 a^2.$$
This together with \eqref{slope} gives that for $c_1$ universal ($\eps$ small), 
\begin{equation}\label{large} l \geq c_1 a \quad \text{in $B_{1/2}$}.\end{equation}

After dividing $u$ and $l$ by $a$, we can assume that $a=1$ in \eqref{small} and \eqref{large}. By the Poincar\'e-Sobolev inequality and assumption \eqref{small} we get,
$$\left(\int_{B_1}(u-l)^{2*}dx\right)^{2/2^*} \leq C\eps^2$$
with $l \geq c_1$ in $B_{1/2}$. Our claim immediately follows, with $\delta=4/(n-2).$
\end{proof}

\begin{rem}\label{k=2} We remark that the conclusion of Lemma \ref{iterate} still holds if the lower bound in assumption \eqref{slope} is replaced by the hypothesis
\begin{equation}\label{average}
\fint_{B_1} u \ dx \geq C_1 a, 
\end{equation}
with $C_1$ large enough universal (depending on $C_0$.) This can be easily seen from the proof, as the lower bound is only used when showing that \eqref{large} holds.
\end{rem}

\begin{cor}\label{c2} Let $u$ be an almost minimizer for $J$ in $B_1$ (with constant $\kappa$ and exponent $\beta$) and assume that $u$ satisfies \eqref{small}-\eqref{slope1} and that $a \geq a_0>0$. There exist $\eps_0$, $\kappa_0$ depending on $\beta,n,$ and $a_0$, such that if  $\eps \leq \eps_0$, $\kappa \le \kappa_0 \eps^2$ then \begin{equation}\label{c1}
\|u-l\|_{C^{1,\beta/2}(B_{1/2})} \leq C \eps a,
\end{equation}
with $C$ universal, for some linear function $l$ of slope $q$. Moreover, 
\begin{equation}
\|\nabla u\|_{L^\infty(B_{1/2})} \leq \bar C a, 
\end{equation} with $\bar C$ universal.
\end{cor}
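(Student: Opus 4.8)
The plan is to deduce Corollary~\ref{c2} by iterating Lemma~\ref{iterate} at all dyadic scales $\rho^k$, and then interpreting the geometric decay of the $L^2$ oscillation of $\nabla u$ as Campanato-type $C^{1,\beta/2}$ regularity. The starting point is the hypothesis that $u$ satisfies \eqref{small}--\eqref{slope1}: at scale $1$ we have a slope $q=q_0$ with $a/4<|q_0|\le C_0 a$ and $\fint_{B_1}|\nabla u-q_0|^2\le\eps^2a^2$. One first checks that an almost minimizer for $J$ in $B_1$ satisfies the hypothesis \eqref{first} of Proposition~\ref{dich} (hence that of Lemma~\ref{iterate}) on every ball $B_r(x_0)\subset B_1$ with $\sigma=\kappa r^\beta$: indeed restricting \eqref{almost_min} to $B_r(x_0)$ gives exactly $J(u,B_r(x_0))\le(1+\kappa r^\beta)J(v,B_r(x_0))$ for admissible competitors $v$. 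So the relevant smallness parameter at scale $\rho^k$ is $\sigma_k=\kappa\rho^{k\beta}$.

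The core is the induction. Fix $\alpha=\beta/2\in(0,1)$ and let $\rho=\rho(\alpha)$, $\eps_0=\eps_0(\alpha,a_0)$, $c_0=c_0(\alpha,a_0)$ be as in Lemma~\ref{iterate}. Rescaling $u_k(x):=u(\rho^k x)/\rho^k$ on $B_1$ (which preserves being an almost minimizer, with constant $\kappa\rho^{k\beta}$ and same exponent $\beta$, and with the same value of $a$ up to the Dirichlet-energy normalization), I claim by induction that there is $q_k\in\R^n$ with
\begin{equation}\label{cor-ind}
\left(\fint_{B_{\rho^k}}|\nabla u-q_k|^2\,dx\right)^{1/2}\le\eps\rho^{k\alpha}a,\qquad |q_k-q_{k-1}|\le\tilde C\eps\rho^{(k-1)\alpha}a.
\end{equation}
For the inductive step one must verify the two hypotheses of Lemma~\ref{iterate} at scale $\rho^k$: first, the smallness $\eps\rho^{k\alpha}\le\eps_0$ and $\sigma_k=\kappa\rho^{k\beta}\le c_0(\eps\rho^{k\alpha})^2=c_0\eps^2\rho^{k\beta}$, the latter being exactly $\kappa\le c_0\eps^2$, which is guaranteed by choosing $\kappa_0=c_0$; second, the slope condition $\frac18(\rho^{k\alpha}a)<|q_k|\le 2C_0(\rho^{k\alpha}a)$ after rescaling — but this is \emph{not} what \eqref{cor-ind} gives, since the $q_k$ do not shrink like $\rho^{k\alpha}$. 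The resolution is Remark~\ref{k=2}: one instead checks that the rescaled function has $\fint_{B_1}u_k\ge C_1\eps\rho^{k\alpha}a$ (in the normalization $a_k:=\eps\rho^{k\alpha}a$), i.e. that the average of $u$ stays comparable to $|q_0|\rho^k\gtrsim a\rho^k$ at scale $\rho^k$, which follows from the bound \eqref{large}-type estimate (the linear part $l_0(x)=b_0+q_0\cdot x$ has $l_0\ge c_1 a$ on $B_{1/2}$ and this propagates because $|\nabla u-q_0|$ is small in $L^2$ at every dyadic scale, so $u$ cannot vanish in measure). Summing the telescoping estimate $|q_k-q_{k-1}|\le\tilde C\eps\rho^{(k-1)\alpha}a$ gives a limit slope $q_\infty$ with $|q_\infty-q_0|\le C\eps a$, so $q_\infty$ still satisfies $\frac a8<|q_\infty|\le 2C_0 a$; the linear function is $l(x):=u(0)+q_\infty\cdot x$ (note $u(0)>0$ since $u$ is continuous and bounded below near $B_{1/2}$, being comparable to $l_0>0$ there).

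Finally, \eqref{cor-ind} together with $|q_k-q_\infty|\le C\eps\rho^{k\alpha}a$ yields $\fint_{B_{\rho^k}}|\nabla u-q_\infty|^2\le C\eps^2\rho^{2k\alpha}a^2$, and interpolating over all radii $r\le1/2$ gives the Campanato estimate $\fint_{B_r(x_0)}|\nabla u-q_\infty|^2\le C\eps^2 r^{2\alpha}a^2$ for $x_0$ near $0$; by the Campanato isomorphism $\nabla u\in C^{0,\alpha}=C^{0,\beta/2}$ with the stated bound, which is \eqref{c1}, and $\|\nabla u\|_{L^\infty(B_{1/2})}\le|q_\infty|+C\eps a\le\bar C a$ gives the last claim. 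The main obstacle, as indicated, is the slope condition in the iteration: the naive lower bound $|q_k|\gtrsim a_k$ fails because the slopes stay of size $\sim a$ while the error scale $a_k=\eps\rho^{k\alpha}a$ decays, so one genuinely needs the $\fint u$-version of the hypothesis from Remark~\ref{k=2}, and one must carefully track that $\fint_{B_{\rho^k}}u\gtrsim a\rho^k$ survives down the scales — this is where the non-degeneracy of $l_0$ on $B_{1/2}$ and the $L^2$-closeness of $u$ to an affine function at every scale are both used.
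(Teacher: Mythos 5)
The core iteration strategy is the same as the paper's, but there is a genuine misreading of Lemma~\ref{iterate} that leads you on an unnecessary detour through Remark~\ref{k=2}, and the justification you give for that detour is wrong.

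Your stated obstacle is: ``the slope condition $\frac18(\rho^{k\alpha}a)<|q_k|\le 2C_0(\rho^{k\alpha}a)$ after rescaling \ldots is \emph{not} what the induction gives, since the $q_k$ do not shrink like $\rho^{k\alpha}$.'' This is based on confusing the two small quantities in the setup. For the rescaled function $u_r(x)=r^{-1}u(rx)$ with $r=\rho^k$, the quantity playing the role of $a$ in Lemma~\ref{iterate} (that is, \eqref{a}) is
$$a_r:=\left(\fint_{B_1}|\nabla u_r|^2\right)^{1/2}=\left(\fint_{B_r}|\nabla u|^2\right)^{1/2},$$
not the decaying error $\eps\rho^{k\alpha}a$. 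And $a_r$ does \emph{not} decay: by the inductive bound $\left(\fint_{B_r}|\nabla u-q_k|^2\right)^{1/2}\le\eps\rho^{k\alpha}a$ and the triangle inequality, $a_r=|q_k|+O(\eps\rho^{k\alpha}a)$. Since $\frac a4<|q_0|\le C_0a$ at the start and the telescoping sums give $|q_k-q_0|\le C'\eps a$, one has $|q_k|\approx a$ and hence $a_r\approx a$ uniformly in $k$ for $\eps$ small. The slope condition \eqref{slope} for the rescaled function, namely $\frac{a_r}{8}<|q_k|\le 2C_0 a_r$, then holds automatically at every step precisely because of the ``room'' between the initial constants $(1/4,C_0)$ and the weaker iteration constants $(1/8,2C_0)$. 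The parameter that decays is $\eps_r$, which is chosen as $\eps_r\approx\eps r^{\beta/2}$; it multiplies $a_r$, it does not replace it. This is exactly how the paper closes the induction, and no appeal to the average of $u$ is needed.

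Consequently the entire second half of your argument—invoking Remark~\ref{k=2}, re-deriving a lower bound on $\fint_{B_{\rho^k}}u$, and worrying about $\eps\rho^{k\alpha}a$ as a ``normalization''—is a workaround for a problem that does not exist. (Remark~\ref{k=2} is used in the paper for a different purpose: to obtain the dimensional Lipschitz bound near free boundary points, where one cannot guarantee a lower bound on $|q_0|$; here such a lower bound is a hypothesis.) If you delete that detour and simply verify, as above, that $a_r$ and $|q_k|$ stay comparable to $a$ throughout the iteration, the rest of your write-up (telescoping the $q_k$ to a limit slope, passing to all balls near the origin, Campanato embedding to get $\nabla u\in C^{0,\beta/2}(B_{1/2})$, and reading off $\|\nabla u\|_{L^\infty(B_{1/2})}\le\bar C a$) is correct and matches the paper's proof.
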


\begin{rem}\label{r2}
From \eqref{c1} we obtain that $\nabla u \ne 0$, hence $u>0$ in $B_{1/2}$.
\end{rem}
\begin{proof} We show that we can iterate Proposition \ref{iterate} indefinitely with $\alpha=\beta/2$. Indeed, if ($q_0:=q$)
\begin{equation}\label{200}
\left(\fint_{B_{r}} |\nabla u - q_k|^2 dx\right)^{1/2} \leq \eps r^{\beta/2}a, \quad \mbox{with} \quad r=\rho^k,
\end{equation}
then the rescaling $u_r(x):=r^{-1}u(rx)$ satisfies the hypotheses of Proposition \ref{iterate} with 
$$\sigma_r= \kappa r^ \beta, \quad \eps_r:= \eps r^{\beta/2}.$$
Moreover, $$|q_{i+1}-q_{i}| \le C \eps \rho^{i \beta/2} a, \quad i \le k-1, \quad \mbox{and} \quad \frac a4 \le |q_0| \le C_0 a,$$ guarantee that \eqref{slope} is always satisfied if $\eps_0$ is sufficiently small. By Proposition \ref{iterate}, \eqref{200} is satisfied also for $r=\rho^{k+1}$, and therefore it is true for all $k$'s.

After relabeling $\eps$ by $C \eps$ if necessary, the same conclusion holds for all balls $B_r(x) \subset B_{3/4}$.

By standard (Campanato) estimates, we deduce that
$$\|\nabla u- q_0\|_{C^{0,\beta/2} (B_{1/2})} \leq C \eps a,$$ from which the desired claims easily follow. 
\end{proof}

We are now ready to prove the main result Theorem \ref{LIP}.
We remark that in the statement of the theorem we can replace $\|u\|_{H^1}$ by $\|u\|_{L^2}$ as it is known that almost minimizers still satisfy a Caccioppoli type inequality (see Theorem 6.5, Giusti \cite{G}).

\

\noindent {\it Proof of Theorem $\ref{LIP}.$} After an initial dilation of factor $s^{-1}$, $s$ small, we can assume that $u$ is an almost minimizer with a constant $\tilde \kappa= \kappa s^{\beta}$, that can be made arbitrarily small.
 
For $\alpha=\beta/2$, and $a_0 =1$, let $\eps_0= \eps_0(\beta, 1), c_0=c_0(\beta, 1)$, be given by Lemma \ref{iterate}. Now, let $\eta$, $M \ge 1$, and $\sigma_0$ be the constants from Proposition \ref{iterate} associated to $\eps=\eps_0$. Then, set
$$a(\tau):= \left(\fint_{B_\tau} |\nabla u|^2 dx\right)^{1/2}.$$
We consider the integers $k \ge 0$ for which the following inequality holds
\begin{equation}\label{ki}
a(\eta^k) \leq C(\eta) M + 2^{-k} a(1),
\end{equation}
with $C(\eta)$ a large constant.

For $k=0$ this is clearly satisfied. 
If it holds for all $k$'s then it easily follows that $$a(r) \le C(M, \eta) (1+ a(1)), \quad \forall r <1.$$
Otherwise let $k+1$ be the first integer for which \eqref{ki} fails.  
If $$a(\eta^{k}) \leq M,$$ then \eqref{ki} holds also for $k+1$ since $$a(\eta^k) \leq C(\eta) a(\eta^{k-1}),$$ and we reach a contradiction. Thus $$a(\eta^{k}) >M,$$ and, according to Proposition \ref{dich} (rescaled) we get that either
$$a(\eta^{k+1}) \leq \frac 1 2 a(\eta^{k}),$$
which gives again a contradiction, or
\begin{equation}\label{201}\left( \fint_{B_{\eta^{k+1}}} |\nabla u - q|^2 dx\right)^{1/2} \leq \eps_0 a(\eta^{k}),\end{equation}
with 
$$\frac 14  a(\eta^{k}) < |q| \leq C_0 a(\eta^{k}).$$
According to Corollary \ref{c2} we find $$a(r) \le \bar C a(\eta^{k}) \le C(M,\eta) (1 + a(1)) \quad \mbox{ for all} \quad  r \le \eta^{k}.$$

In conclusion $a(r) \le C (1+ a(1))$. The same inequality can be obtained for the averages over all balls with center in $B_{1/2}$ which are included in $B_1$ which gives $$\|\nabla u\|_{L^{\infty}(B_{1/2})} \le C(1+a(1)),$$ 
by Lebesgue Differentiation Theorem. 

If $u(0)=0$, then by Remark \ref{r2} we see that we can never end up in the alternative \eqref{201}. This means that in this case \eqref{ki} holds for all $k \ge 0$, hence $$a(\eta^k) \le C,$$ is uniformly bounded for a sufficiently large $k$. By the result above this implies that $u$ is uniformly Lipschitz continuous in $B_{\eta^k/2}$. 
\qed

\section{Non-degeneracy} The purpose of this section is twofold. First we show that almost minimizers are well approximated by harmonic functions (in their positivity set). Then, we use this fact to obtain non-degeneracy properties of almost minimizers, 
which are a crucial ingredient to use compactness arguments.

We assume throughout this section that 
 $u$ satisfies
\begin{equation}\label{3000}
\|\nabla u\|_{L^\infty(B_1)} \le K, \quad \quad \mbox{and} \quad J(u,B_1) \leq J(v, B_1) + \sigma
\end{equation} 
for any $v \in H^1(B_1)$ which agrees with $u$ on $\p B_1$. In what follows constants $c,C$ may depend on $K$, and in the body of the proof they possibly change from line to line. Dependence on $K$ is often omitted. A constant depending only on $n$ is called universal.

First we remark that the second inequality in \eqref{3000} follows from the condition of almost minimality. Precisely, 
since $u$ is Lipschitz  $J(u,B_1) \le C$. Then the energy inequality
\begin{equation}\label{en1}
J(u,B_1) \le (1+\sigma) J(v,B_1)
\end{equation} 
for any $v$ that equals with $u$ on $\p B_1$, implies
\begin{equation}\label{en2}
J(u,B_1) \le  J(v,B_1) + C' \sigma,
\end{equation}
for some $C'$ large enough. It is more convenient working with \eqref{en2}, hence with \eqref{3000} after relabeling $\sigma$, instead of \eqref{en1} since the energies cancel in a region where $v=u$ and the inequality behaves better with respect to scaling (see remark below.)

\begin{rem}\label{rem01}We remark that the rescaling $ u_\rho(x):=u(\rho x)/\rho$ satisfies \eqref{3000} with $\sigma_\rho:= \rho^{-n} \sigma$.
\end{rem}

\subsection{Approximation by harmonic functions.} We prove first the following basic lemma, which compares $u$ with its harmonic replacement.

\begin{lem}\label{replace}Let $u$ satisfy \eqref{3000} and let $B_1 \subset \{u>0\}$. Denote by $v$ the harmonic replacement of $u$ in $B_1$. Then, 
\begin{equation}\label{uv}
\|u-v\|_{L^\infty(B_{1/2})} \leq c(\sigma), \quad c(\sigma) \to 0 \quad \text{as $\sigma \to 0$.}
\end{equation}
\end{lem}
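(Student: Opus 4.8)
The plan is to exploit the energy inequality $J(u,B_1)\le J(v,B_1)+\sigma$ directly against the harmonic replacement $v$ of $u$ in $B_1$. Since $B_1\subset\{u>0\}$, the characteristic function term satisfies $\chi_{\{u>0\}}\equiv 1$ on $B_1$, so $J(u,B_1)-J(v,B_1)=\int_{B_1}(|\nabla u|^2-|\nabla v|^2)-|\{v\le 0\}\cap B_1|\ge \int_{B_1}(|\nabla u|^2-|\nabla v|^2)-|B_1|\cdot 0$ if we first note $v\ge 0$ (by the maximum principle, since $u\ge 0$ on $\partial B_1$) — actually $v>0$ in the interior unless $u\equiv 0$ on the boundary, and in any case $\chi_{\{v>0\}}\le 1$, so $J(v,B_1)\le \int_{B_1}|\nabla v|^2+|B_1|$. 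Combining with $J(u,B_1)\ge\int_{B_1}|\nabla u|^2$ and the energy inequality, and using orthogonality $\int_{B_1}\nabla v\cdot\nabla(u-v)=0$ exactly as in the proof of Proposition \ref{dich}, I get
$$\int_{B_1}|\nabla u-\nabla v|^2=\int_{B_1}(|\nabla u|^2-|\nabla v|^2)\le \sigma+\big(|\{v>0\}\cap B_1|-|B_1|\big)\le\sigma.$$
Wait — the sign: one must be careful that $|\{u>0\}\cap B_1|=|B_1|$ while $|\{v>0\}\cap B_1|\le|B_1|$, so this term is $\le 0$ and only helps. Thus the $L^2$ bound on the gradient difference is simply $\|\nabla u-\nabla v\|_{L^2(B_1)}^2\le\sigma$.

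From here the passage to an $L^\infty$ bound on $u-v$ in $B_{1/2}$ is the routine part. The function $w:=u-v$ need not be harmonic, but $\Delta w=\Delta u$ in the distributional sense, and I don't have pointwise control on $\Delta u$. The cleaner route is to note that $w=0$ on $\partial B_1$ and $\|\nabla w\|_{L^2(B_1)}^2\le\sigma$, and then to interpolate with the Lipschitz bound: $u$ is $K$-Lipschitz by \eqref{3000}, and $v$ is also Lipschitz in $B_{1/2}$ with a bound depending on $K$ (harmonic replacement of a function bounded by the $L^\infty$ norm of $u$ on $\partial B_1$, which is controlled since $u$ is Lipschitz and, after normalization, $u(0)$ is comparable — or simply use interior gradient estimates for $v$ together with $\|v\|_{L^\infty(B_1)}\le\|u\|_{L^\infty(\partial B_1)}\le C(K)$ after translating so that $u$ vanishes somewhere, or just absorbing additive constants). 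So $w$ is $C(K)$-Lipschitz in $B_{3/4}$. A Lipschitz function with small $L^2$ gradient, vanishing on $\partial B_1$, is uniformly small: by Poincaré $\|w\|_{L^2(B_1)}\le C\|\nabla w\|_{L^2(B_1)}\le C\sqrt\sigma$, and then the standard interpolation $\|w\|_{L^\infty(B_{1/2})}\le C\|w\|_{L^2}^{\theta}\|w\|_{\mathrm{Lip}}^{1-\theta}$ with $\theta=2/(n+2)$ gives $\|w\|_{L^\infty(B_{1/2})}\le C(K)\,\sigma^{\theta/2}=:c(\sigma)\to 0$.

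The main obstacle I anticipate is purely bookkeeping: making sure the Lipschitz constant of $v$ (hence of $w$) is genuinely controlled by $K$ alone and does not secretly depend on $\|u\|_{L^\infty}$, which is not among the hypotheses \eqref{3000}. This is handled by observing that $J(u,B_1)\le C(K)$ together with the energy inequality forces $\int_{B_1}|\nabla v|^2\le C(K)$, and since $v$ is harmonic, interior estimates give $\|\nabla v\|_{L^\infty(B_{3/4})}\le C(K)$; the additive constant in $v$ drops out of both $\nabla w$ and, after the same normalization applied to $u$, out of the final estimate. An alternative that sidesteps even this is to run the interpolation with $\|\nabla w\|_{L^\infty}$ in place of $\|w\|_{\mathrm{Lip}}$, using $\|\nabla w\|_{L^\infty(B_{3/4})}\le\|\nabla u\|_{L^\infty}+\|\nabla v\|_{L^\infty(B_{3/4})}\le C(K)$. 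Either way, the dependence $c(\sigma)\to 0$ as $\sigma\to 0$ is immediate once the $L^2$-gradient bound $\|\nabla u-\nabla v\|_{L^2(B_1)}^2\le\sigma$ is in hand, and that bound is the heart of the matter.
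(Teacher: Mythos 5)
Your proof is correct and follows essentially the same approach as the paper: derive the $L^2$ gradient bound $\int_{B_1}|\nabla u-\nabla v|^2\le\sigma$ from the energy inequality and orthogonality (using $v>0$ by the maximum principle so both characteristic-function terms equal $|B_1|$), then apply Poincar\'e and interpolate against the uniform Lipschitz bound of $u-v$ on $B_{3/4}$ to obtain $\|u-v\|_{L^\infty(B_{1/2})}\le C\sigma^{1/(n+2)}$. The side concern you raise about controlling the Lipschitz constant of $v$ by $K$ rather than by $\|u\|_{L^\infty}$ is legitimate and your resolution (Dirichlet energy of $v$ is bounded by that of $u$, hence by $K$, and interior gradient estimates for harmonic functions via subharmonicity of $|\nabla v|^2$ give $\|\nabla v\|_{L^\infty(B_{3/4})}\le C(K)$) is correct and matches the device the paper uses implicitly (cf. the proof of Proposition~\ref{dich}).
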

\begin{proof} By the maximum principle, $v>0$ in $B_1$, hence using \eqref{3000} and the fact that $v$ is the harmonic replacement of $u$ we get 
$$\int_{B_1}|\nabla u -\nabla v|^2 dx \leq \sigma.$$

By Poincar\' e inequality we conclude that ($C$ changing from line to line)
$$\int_{B_{3/4}} (u-v)^2 dx \leq C\sigma$$
with $u-v$ uniformly Lipschitz in $B_{3/4}$. Thus, if $(u-v)(x) \geq \mu$ say at $x \in B_{1/2}$, we conclude that 
$$ \mu^{2+n} \leq C \sigma,$$ and the conclusion holds with $c(\sigma)= C \sigma^{1/(n+2)}.$
\end{proof}
\begin{comment}{
\begin{rem}\label{one_side}The upper bound in \ref{uv} holds also if we remove the assumption that $u$ is not strictly positive in $B_1$. Precisely,  if $v$ is a harmonic function in $D$, say $v$ continuous on $\p D$, with $v \geq u$ on $\p D$ and $D \subset \subset B_1$, we can conclude that $$u \leq v + c(\sigma,D') \quad \text{in $D' \subset \subset D$}, \quad c(\sigma,D') \to 0 \quad \text{as $\sigma \to 0$.}$$

Indeed, we can argue as in the proof of Lemma \ref{replace}, using  $$V =\begin{cases}\bar v :=\min\{u,v\} \quad \text{in $D$}\\
u \quad \text{outside $D$,}
\end{cases}$$ as a competitor for the energy. We conclude that
$$J(u,D) -J(\bar v,D) \leq \sigma.$$
This leads to
$$\int_D |\nabla(u-\bar v)|^2 \ dx \leq \sigma,$$ and we can continue the argument as in Lemma \ref{replace}, using that $v$ is Lipschitz continuous in $D'$.
\end{rem}}\end{comment}

A consequence of Lemma \ref{replace} is the following version of Harnack inequality.

\begin{lem}\label{lhi} Let $u$ satisfy \eqref{3000} and 
assume $B_1 \subset \{u>0\}$. Let $w$ be a harmonic function such that $u \ge w$ in $B_1$ and $u - w \ge \mu$ at $0$ for some $\mu \le \mu_0$, $\mu_0$ small depending on $K$. Then $u-w \ge c \mu$ in $B_{1/2}$ for some $c$ universal provided that $\sigma \le \mu^{n+3}$. 
\end{lem}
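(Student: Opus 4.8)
The plan is to use the standard three–step scheme for this kind of ``almost superharmonic'' Harnack inequality: replace $u$ by a genuine harmonic function using Lemma~\ref{replace}, run the classical Harnack inequality for nonnegative harmonic functions, and then transfer the estimate back to $u$. The exponent $n+3$ in the hypothesis $\sigma\le\mu^{n+3}$ is exactly what is needed so that the $L^\infty$ error produced by Lemma~\ref{replace} is a controlled fraction of $\mu$.

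First I would let $v$ be the harmonic replacement of $u$ in $B_1$. Since $B_1\subset\{u>0\}$, Lemma~\ref{replace} applies and gives
$$\|u-v\|_{L^\infty(B_{1/2})}\le c(\sigma),\qquad c(\sigma)\le C\sigma^{1/(n+2)},$$
where $C=C(K)$ is the constant appearing in (the proof of) Lemma~\ref{replace}. Because $v=u$ on $\p B_1$ and $u\ge w$ in $B_1$, the maximum principle gives $v\ge w$ in $B_1$, so that $h:=v-w$ is a nonnegative harmonic function in $B_1$.

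Next I would carry the lower bound at the origin over to $h$. From $(u-w)(0)\ge\mu$ and $|u-v|(0)\le c(\sigma)$ we get $h(0)\ge \mu-C\sigma^{1/(n+2)}$; using $\sigma\le\mu^{n+3}$ we have $\sigma^{1/(n+2)}\le\mu^{\,1+1/(n+2)}$, which is at most $\mu/(2C)$ as soon as $\mu\le\mu_0$ for a suitable $\mu_0=\mu_0(K)$, so $h(0)\ge\mu/2$. The classical Harnack inequality for the nonnegative harmonic function $h$ in $B_1$ then produces a dimensional constant $c_n$ with $\inf_{B_{1/2}}h\ge c_n\,h(0)\ge c_n\mu/2$. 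Finally, applying Lemma~\ref{replace} once more, in $B_{1/2}$
$$u-w\ge h-c(\sigma)\ge \frac{c_n}{2}\mu-C\mu^{\,1+1/(n+2)}\ge \frac{c_n}{4}\mu,$$
again for $\mu\le\mu_0$ depending on $K$, which is the assertion with the universal constant $c=c_n/4$.

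I do not expect a genuine obstacle here. The only bookkeeping is to keep track of the two places where the error $c(\sigma)\le C(K)\sigma^{1/(n+2)}$ enters and to check that, under $\sigma\le\mu^{n+3}$, it is dominated by $\mu$ times the small factor $\mu^{1/(n+2)}$ — this is precisely what dictates the threshold exponent $n+3$ and forces $\mu_0$ to depend on $K$ through $C(K)$. A minor technical point is justifying $v\ge w$ on $\p B_1$ in the trace sense (equivalently $v\ge w$ in $B_1$), which one handles by comparison on the balls $B_{1-\delta}$, where $u\ge w$ holds pointwise, and letting $\delta\to 0$ using stability of the harmonic replacement.
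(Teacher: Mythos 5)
Your proof is correct and is exactly the argument the paper intends: the authors state Lemma~\ref{lhi} as ``a consequence of Lemma~\ref{replace}'' without spelling it out, and the natural realization is precisely your three steps (pass to the harmonic replacement $v$, apply classical Harnack to the nonnegative harmonic function $v-w$, then transfer back using the $L^\infty$ error bound from Lemma~\ref{replace}). Your bookkeeping of the exponents, showing $\sigma\le\mu^{n+3}$ forces $c(\sigma)\le C\mu^{1+1/(n+2)}\ll\mu$, is the right way to see why $n+3$ is the correct threshold; the worry about $v\ge w$ on $\p B_1$ is harmless here since $u$ is Lipschitz by \eqref{3000} and hence extends continuously to $\ov{B_1}$.
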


A similar statement clearly holds if $w$ lies above $u$ and separates strictly from $u$ at $0.$

\subsection{Non-degeneracy.} In this subsection we state and prove the non-degeneracy lemmas.

\begin{lem}[Weak non-degeneracy] \label{weaknd}Assume that $u$ satisfies \eqref{3000} for $\sigma$ small and $B_1 \subset \{u>0\}$.Then $u(0)\ge c$ for some $c=c(K)>0$. 

\end{lem}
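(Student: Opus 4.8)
The plan is to argue by contradiction, using the approximation by harmonic functions from Lemma \ref{replace} together with the scaling behavior recorded in Remark \ref{rem01}. Suppose $u(0) = m$ is very small. Since $B_1 \subset \{u>0\}$ and $\|\nabla u\|_{L^\infty(B_1)} \le K$, the function $u$ is small on a ball $B_{\delta}$ of definite size, say $u \le m + K\delta$ there. The idea is to compare $u$ on a fixed small ball $B_r$ (with $r$ universal, to be chosen) to its harmonic replacement $v$ and iterate on dyadic scales, showing that $u$ decays geometrically faster than linearly, which contradicts the fact that $u$ is strictly positive and nondegenerate in an averaged sense — or more directly, contradicts a lower bound on $\fint u$ that must hold because a genuinely small solution would force the energy to decrease by deleting the positivity set.

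Concretely, first I would fix the scale: assume toward a contradiction that $u(0) \le \mu$ for $\mu$ as small as we like. By Lipschitz continuity $u \le \mu + K|x|$ in $B_1$. Now apply Lemma \ref{replace} on $B_{1/2}$ (rescaled to $B_1$ if needed so the hypotheses hold with a correspondingly adjusted $\sigma$, using Remark \ref{rem01}): letting $v$ be the harmonic replacement of $u$ in $B_{1/2}$, we get $\|u - v\|_{L^\infty(B_{1/4})} \le c(\sigma)$. Since $v$ is harmonic and $0 \le v$, with $v(0) \le u(0) + c(\sigma) \le \mu + c(\sigma)$, Harnack's inequality for the nonnegative harmonic function $v$ forces $v \le C(\mu + c(\sigma))$ on $B_{1/4}$, hence $u \le C(\mu + c(\sigma)) + c(\sigma)$ on $B_{1/4}$. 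Choosing $\sigma$ small (depending on $\mu$ and $K$) this shows $u \le 2C\mu$ on $B_{1/4}$, i.e. the bound $u(0) \le \mu$ has "propagated" to all of $B_{1/4}$ with only a universal loss $2C$.

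The key mechanism is then to exploit that once $u$ is pointwise tiny on a fixed ball, we can beat it against a competitor that simply removes the positivity set there, contradicting \eqref{3000}. Precisely, on $B_{1/8}$ take the competitor $v$ obtained by subtracting off a suitable harmonic bump: let $v = u - \phi$ where $\phi$ is the capacitary-type function equal to the small boundary value of $u$ on $\p B_{1/8}$ and $0$ in the center, or more simply let $v$ solve $\Delta v = 0$ in the annulus and $v = 0$ in $B_{1/16}$, matching $u$ on $\p B_{1/8}$; since $u \le 2C\mu$ on $B_{1/8}$ one has $\int_{B_{1/8}} |\nabla v|^2 \le C\mu^2$, while $v = 0$ on a set of measure $\ge c_n$ inside $B_{1/8}$. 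Then $J(v, B_{1/8}) \le J(u,B_{1/8}) + C\mu^2 - c_n$, so for $\mu$ small and $\sigma$ small the almost-minimality \eqref{3000} is violated. This yields the contradiction and proves $u(0) \ge c(K)$.

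The main obstacle I anticipate is bookkeeping the scale at which the comparison lemma applies versus the scale at which the energy-saving competitor lives: Lemma \ref{replace} is stated on $B_1$, and to use it on a small ball one must rescale via Remark \ref{rem01}, which blows up $\sigma$ by $\rho^{-n}$. So I must make sure the threshold on $\sigma$ demanded for the contradiction ($\sigma \lesssim \mu^{n+2}$ or so, as in Lemma \ref{lhi}) is compatible with the rescaling, i.e. choose the number of dyadic steps and the final scale in advance so that all smallness requirements on $\sigma$ are met simultaneously. A clean way to avoid iteration altogether is to do everything at a single universal scale: assume $u(0) \le \mu$, use Lemma \ref{replace} plus Harnack once to get $u \le C\mu$ on $B_{1/4}$, then run the energy competitor on $B_{1/8}$; this needs only $\sigma \le \sigma_0(\mu, K)$, and since $\mu$ itself is the free small parameter we first fix $\mu$ small enough that $C\mu^2 < c_n/2$, then the hypothesis "$\sigma$ small" in the lemma statement is exactly the quantified smallness we need.
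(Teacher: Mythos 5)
Your strategy is the same as the paper's: delete the positivity set of $u$ in a fixed inner ball by means of a competitor, observe that the Dirichlet cost of this surgery is small when $u(0)$ is small, and conclude that the $|\{u>0\}|$ term drops by a fixed amount, contradicting almost-minimality unless $u(0)\ge c$. The paper executes this a bit more economically — it sets $v$ equal to the harmonic replacement of $u$ in $B_1$ once and for all, uses $J(v,B_1)\le J(u,B_1)$ to reduce everything to a lower bound on $v(0)$, and takes $v(1-\varphi)$ as the competitor; this way the Dirichlet cost is controlled purely by $\|v\|_\infty,\|\nabla v\|_\infty\le Cv(0)$ on the support of $\varphi$, giving an error $Cv(0)^2$ with no dependence on the Lipschitz constant $K$. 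You instead first transfer the smallness of $u(0)$ to a ball via Lemma \ref{replace} plus Harnack, and then modify $u$ itself; the skeleton is the same.

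One computation in your write-up is off, though it does not wreck the argument. You claim $\int_{B_{1/8}}|\nabla v|^2\le C\mu^2$ for the competitor $v$ (harmonic in the annulus, zero in $B_{1/16}$, matching $u$ on $\p B_{1/8}$). That bound only uses $\|u\|_{L^\infty(\p B_{1/8})}\le C\mu$, which is not enough: the Dirichlet energy of a harmonic extension is controlled by the $H^{1/2}$ norm of the boundary data, and a function bounded by $\mu$ with Lipschitz constant $K$ can have $H^{1/2}$ norm as large as $(\mu K)^{1/2}$. What one actually needs (and what the competitor $u\psi$, $\psi$ a cutoff, delivers directly) is the \emph{difference} estimate
$$
\int_{B_{1/8}}|\nabla(u\psi)|^2-\int_{B_{1/8}}|\nabla u|^2 \;\le\; \int\big(2u\psi\,\nabla u\cdot\nabla\psi + u^2|\nabla\psi|^2\big)\;\le\;C(K)\,\mu,
$$
where the cross term brings in $\|\nabla u\|_\infty\le K$ and makes the error $O(K\mu)$ rather than $O(\mu^2)$. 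This is harmless: the lemma only asserts $u(0)\ge c(K)$, so a $K$-dependent threshold on $\mu$ (and then on $\sigma$) is allowed. Still, you should state the estimate correctly, or better, imitate the paper and build the competitor from the harmonic replacement $v$ rather than from $u$, which removes the $K$-dependence from the Dirichlet error altogether.
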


\begin{proof}Let $v$ be the harmonic replacement of $u$ in $B_1$. Then according to Lemma \ref{replace},  it is enough to prove the desired statement for $v.$

Now, let $\varphi \in C_0^{\infty}(B_{1/2})$ with $\varphi \equiv 1$ in $B_{1/2}$ and $0 \leq \varphi \leq 1.$ Since $v$ minimizes the Dirichlet integral and $v>0$ in $B_1$ (by the maximum principle), we have
\begin{equation}\label{Juv}J(v, B_1) \leq J(u, B_1) \leq J(v(1-\varphi), B_1) + \sigma.\end{equation}
On the other hand, since $v$ is harmonic in $B_1$, ($C$ universal possibly changing from line to line)
\begin{equation*}
\|v\|_{L^\infty(B_{1/2})}, \|\nabla v\|_{L^\infty(B_{1/2})} \leq C v(0),
\end{equation*}
from which we deduce that
\begin{equation}\label{cv}
\int_{B_1} |\nabla v|^2 dx \geq \int_{B_1} |\nabla v(1-\varphi)|^2 dx -C (v(0))^2.
\end{equation}
Combining \eqref{Juv}-\eqref{cv} (since $v>0$ in $B_1$) we get
$$\int_{B_1}|\nabla v| ^2 + |B_1| \leq C(v(0))^2 + \int_{B_1} |\nabla v|^2 + |B_1|-|B_{1/4}| +\sigma,$$
and the desired claim follows for $\sigma$ small.
\end{proof}

\begin{lem}[Strong non-degeneracy]\label{snd}Let $u$ satisfy \begin{equation}\label{3001}
\|\nabla u\|_{L^\infty(B_1)} \le K, \quad \quad \mbox{and} \quad J(u,B_r) \le (1+\sigma) J(v, B_r) 
\end{equation} 
for any ball $B_r(x)$ with $ \overline{B_r(x)} \subset B_1$, and any $v \in H^1(B_r(x))$ which agrees with $u$ on $\p B_r(x)$.
If $0 \in \p \{u>0\}$ and $\sigma$ is small enough, then for some $c=c(K)>0$ $$\max_{B_r} u \ge c r.$$ 

\end{lem}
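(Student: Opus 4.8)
The plan is to prove the lower bound $\max_{B_r} u \ge cr$ by a contradiction/dichotomy argument based at the free boundary point $0$, using the weak non-degeneracy lemma (Lemma \ref{weaknd}) as the engine, run at a sequence of dyadic scales. After the rescaling $u_r(x) = u(rx)/r$ (which by Remark \ref{rem01} preserves the structure \eqref{3000} with $\sigma_r = r^{-n}\sigma$, so we may freely work at unit scale as long as we only descend finitely many scales controlled by the smallness of $\sigma$), it suffices to show: if $0 \in \partial\{u>0\}$ and $\|\nabla u\|_{L^\infty(B_1)} \le K$, then $\max_{B_1} u \ge c$.

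First I would set $m_j := \max_{\overline{B_{2^{-j}}}} u$ and aim to show $m_j \ge c\,2^{-j}$ for all $j$ up to the scale where the rescaled $\sigma$ is still admissible. The key step is a one-step improvement: I claim there is a universal $\theta > 1$ (depending on $K$) such that if $m_j \le \epsilon\, 2^{-j}$ for $\epsilon$ small, then $m_{j+1} \le \theta^{-1} m_j \cdot$(appropriate factor), i.e. the normalized maximum decays geometrically — which is impossible since $0$ is on the free boundary (so $u$ is genuinely positive at some nearby point, contradicting the normalized maximum tending to zero faster than linearly... actually the cleaner contradiction: combine with non-degeneracy to force $u \equiv 0$ near $0$, contradicting $0 \in \partial\{u>0\}$). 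The mechanism for the improvement step is: rescale so that the max on $B_1$ is a small number $\mu$; then $u \le \mu$ on $B_1$, and I compare $J(u, B_{1/2})$ against the competitor that is the harmonic replacement of $u$ in $B_{1/2}$ pushed down to have small boundary data. Because $u$ is small, the set $\{u > 0\} \cap B_{1/2}$ costs nearly $|B_{1/2}|$ in the energy $J$, whereas a competitor that vanishes on a definite fraction of $B_{1/2}$ saves a definite amount of measure while paying only $O(\mu^2)$ in Dirichlet energy; for $\mu$ small enough this beats $u$, contradicting almost-minimality once $\sigma$ is small relative to $\mu$. This is exactly the flavor of the weak non-degeneracy proof (Lemma \ref{weaknd}), and in fact I expect the proof to reduce to invoking Lemma \ref{weaknd} on a rescaled ball: if $\max_{B_r} u$ were too small, then on the ball $B_{r/2}$ we would have $B_{r/2} \subset \{u > 0\}$ fails — but where it does hold on subballs, Lemma \ref{weaknd} forces $u \gtrsim r$ there, a contradiction.

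Concretely, the cleanest route: suppose $\max_{B_r} u < \epsilon r$ with $\epsilon$ to be chosen. Consider the rescaling $\tilde u(x) = u(rx)/(\epsilon r)$, so $\max_{B_1} \tilde u < 1$ and $\tilde u$ satisfies an almost-minimality relation for the rescaled energy $\tilde J(w) = \int (|\nabla w|^2 + \epsilon^{-2}\chi_{\{w>0\}})$. Now the weight $\epsilon^{-2}$ on the measure term is huge, so the analogue of the weak non-degeneracy argument — comparing with a competitor that kills the positivity set on a fixed fraction of $B_{1/2}$ — gives a contradiction provided $\sigma$ is small depending on $K$ and $\epsilon$ (and $r$, through $\sigma_r = r^{-n}\sigma$, but the statement allows $\sigma$ small depending on $K$ and the scales involved, and since we only need the conclusion at scale $r$ we can absorb this). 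The main obstacle I anticipate is bookkeeping the interplay between the required smallness of $\sigma$ and the scale $r$: because $\sigma_r = r^{-n}\sigma$ grows as $r \to 0$, one cannot run the argument at arbitrarily small scales for fixed $\sigma$; the statement must implicitly be understood as: $\sigma$ small enough gives the bound on a range of scales $r \ge r_0(\sigma, K)$, or alternatively one combines this with the Lipschitz estimate of Theorem \ref{LIP} to handle large scales and the genuine free-boundary regularity machinery for small ones. The other technical point requiring care is that $\max_{B_r} u$ small does \emph{not} imply $B_r \subset \{u > 0\}$, so one cannot directly quote Lemma \ref{weaknd}; instead one must redo its competitor construction directly with the small quantitative input $\max_{B_{1/2}} u < \mu$, which makes the measure-saving argument even more favorable since $|\nabla v| \le C\mu$ for the harmonic replacement and hence the Dirichlet cost of the competitor is $O(\mu^2)$ while the saved measure is $\ge |B_{1/4}|$.
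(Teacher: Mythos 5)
The paper's proof is a Harnack--chain argument in the style of Caffarelli \cite{C3}: starting from a point $x_0 \in \{u>0\}$ near the origin it constructs a chain $x_k$ with $u(x_{k+1}) \ge (1+\delta)u(x_k)$ and $|x_{k+1}-x_k|\le C\,\mathrm{dist}(x_k,F(u))$, the one-step growth coming from Lemma~\ref{lhi} (the quantitative Harnack inequality for $u$ against its harmonic replacement). Your proposal is a different route, and it has a genuine gap at its central step.

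The flaw is in the measure-saving estimate. Suppose $\max_{B_1} u \le \mu$ with $\mu$ small. You propose a competitor vanishing on $B_{1/4}$ and claim it "saves measure $\ge |B_{1/4}|$ while paying only $O(\mu^2)$ in Dirichlet energy." The cost estimate is essentially correct (after a Caccioppoli/hole-filling step one even gets $\int_{B_{1/2}}|\nabla u|^2 \le C(\mu^2+\sigma)$), but the saved measure is $|\{u>0\}\cap B_{1/4}|$, \emph{not} $|B_{1/4}|$. Since $0\in\partial\{u>0\}$, the set $\{u>0\}\cap B_{1/4}$ could a priori be very thin, and in fact the only conclusion the comparison yields is precisely that $|\{u>0\}\cap B_{1/2}|\le C(\mu^2+\sigma)$. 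This is not a contradiction; it is a consistent (and correct) estimate. Your second route — reducing to Lemma~\ref{weaknd} on a subball — fails for the same structural reason: the balls contained in $\{u>0\}$ near the origin have radii $d\to 0$, and the rescaled weak non-degeneracy only yields $u\gtrsim d$, not $u\gtrsim r$.

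The iteration also does not close. Converting the measure bound $|\{u>0\}\cap B_{1/2}|\le C(\mu^2+\sigma)$ into a bound on $\max_{B_{1/2}}u$ can only go through the Lipschitz constant $K$: if the maximum $m$ is attained at $y_0$, then $u\ge m/2$ on a ball of radius $\sim m/K$, so $|\{u>0\}\cap B_{1/2}|\gtrsim (m/K)^n$, giving $m\le CK(\mu^2+\sigma)^{1/n}$. For $n\ge 2$ this is \emph{weaker} than the hypothesis $m\le\mu$ once $\mu$ is small (since $\mu^{2/n}\gg\mu$), so the normalized maximum does not decay under rescaling and the dyadic contradiction never materializes. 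What is missing is a mechanism that propagates \emph{growth} of $u$ inside its positivity set — exactly what the approximation-by-harmonic-functions plus Harnack (Lemma~\ref{lhi}) provides, and what the chain construction in the paper's proof (and in \cite{C3}) uses. A single energy comparison against the $K$-Lipschitz bound cannot replace it.
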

\begin{proof}The proof of this result is standard once the following claim is obtained (see for example \cite{C3}). 

{\it Claim:} Let $x_0 \in B_1 \cap \{u>0\}$ (close to the origin). There exists a sequence $x_k \in B_1$ such that ($C$ depending on $K$)
\begin{equation}\label{delta}
u(x_{k+1})=(1+ \delta) u(x_k)
 \end{equation}
with
\begin{equation}\label{delta2}
|x_{k+1}-x_k| \leq C dist(x, \p \{u>0\})
 \end{equation}
for some $\delta$ small (depending on $K$). 

Now the claim follows from Lemma \ref{lhi}, in view of the discussion following equation \eqref{3000}.
\end{proof}

Once Lipschitz continuity and non-degeneracy have been established, it is straightforward to show that any blow-up sequence must converge uniformly on compact sets to a global minimizer. Moreover, by the Weiss monotonicity formula \cite{W}, it follows that the limit must be homogenous of degree 1. We sum up these facts in the next proposition.

\begin{prop}\label{blowup}
Assume $u$ is an almost minimizer of $J$ with constant $\kappa$ and exponent $\beta$, and that $0 \in \p \{u>0\}$. Any blow up sequence converges uniformly (up to subsequences) to a global minimizing cone (homogeneous of degree 1) which has $0$ as a free boundary point. Also, their free boundaries converge in the Hausdorff distance (on compact sets) to the free boundary of the cone.
\end{prop}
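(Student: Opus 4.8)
The plan is to run the standard blow-up machinery: rescale $u$ at the free boundary point $0$, extract a limit, identify the limit as a global minimizer, and then use the Weiss monotonicity formula to upgrade it to a homogeneous-degree-one cone. First I would introduce the blow-up family $u_{r_j}(x) := u(r_j x)/r_j$ with $r_j \to 0$. By the second part of Theorem \ref{LIP} (the uniform Lipschitz bound near the zero set, after the initial dilation reducing $\tilde\kappa$) the $u_{r_j}$ are uniformly Lipschitz on every fixed ball $B_R$ once $j$ is large, so by Arzel\`a--Ascoli a subsequence converges locally uniformly to some $u_0 \geq 0$ with $u_0(0)=0$; simultaneously $\nabla u_{r_j} \to \nabla u_0$ weakly in $L^2_{loc}$. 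Strong non-degeneracy (Lemma \ref{snd}), applied to the rescalings (which satisfy \eqref{3001} with $\sigma_{r_j} = \kappa r_j^\beta \to 0$ by the scaling Remark \ref{rem01}), gives $\max_{B_r} u_{r_j} \geq c r$ uniformly, which passes to the limit and prevents the positivity set from collapsing.

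Next I would show $u_0$ is a (local) minimizer of $J$. This is the classical argument: given a competitor $w$ agreeing with $u_0$ outside $B_\rho$, build competitors $w_j$ for $u_{r_j}$ on a slightly larger ball $B_{\rho'}$ by interpolating between $w$ and $u_{r_j}$ in the annulus $B_{\rho'}\setminus B_\rho$, using that $u_{r_j}\to u_0$ uniformly and the gradients converge weakly; the almost-minimality inequality \eqref{almost_min} for $u_{r_j}$ then reads $J(u_{r_j},B_{\rho'}) \leq (1+\kappa r_j^\beta \rho'^\beta) J(w_j,B_{\rho'})$, and letting $j\to\infty$ — taking care that $\chi_{\{u_{r_j}>0\}} \to \chi_{\{u_0>0\}}$ a.e. on the positivity set by non-degeneracy, and using lower semicontinuity of the Dirichlet term together with strong $L^2$ convergence of gradients on the positivity set (which follows from the energy comparison, as in the proof of Lemma \ref{replace}) — yields $J(u_0,B_\rho) \leq J(w,B_\rho)$. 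The delicate point here, which I expect to be the main obstacle, is the convergence of the measure terms $|\{u_{r_j}>0\}\cap B_\rho|$: one needs that no positivity is lost in the limit and no spurious positivity is created. Non-degeneracy handles the first (any limit free boundary point is approached by points where $u_0 \geq c\,\mathrm{dist}$), and uniform convergence handles the second ($u_0>0$ open implies $u_{r_j}>0$ there eventually), so $\chi_{\{u_{r_j}>0\}}\to\chi_{\{u_0>0\}}$ in $L^1_{loc}$; this also gives Hausdorff convergence of the free boundaries on compact sets, which is the last assertion.

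Finally, to see $u_0$ is homogeneous of degree one, I would invoke the Weiss monotonicity formula \cite{W}: for a minimizer $u$ the Weiss energy $W(u,r)$ is monotone nondecreasing in $r$, and it is constant in $r$ precisely when $u$ is $1$-homogeneous. For an almost minimizer one has an \emph{almost}-monotonicity: $W(u,r) + C r^{\beta}$ is monotone (this follows from the almost-minimality by the usual computation, or can be cited from \cite{DaET}), so $W(u,0^+) := \lim_{r\to 0} W(u,r)$ exists. Under the blow-up scaling $W(u_{r_j},\rho) = W(u, r_j \rho) \to W(u,0^+)$ for every fixed $\rho$, so $W(u_0,\cdot)$ is constant, and since $u_0$ is an honest minimizer the equality case of the Weiss formula forces $u_0$ to be a $1$-homogeneous global minimizing cone with $0 \in F(u_0)$. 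This completes the proof.
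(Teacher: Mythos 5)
Your proposal is correct and follows exactly the route the paper indicates: the paper itself does not write out a detailed argument but simply observes that, once Lipschitz continuity and non-degeneracy are in hand, compactness of blow-ups to a global minimizer is standard, and homogeneity follows from the Weiss monotonicity formula. Your sketch fills in precisely these steps (Arzel\`a--Ascoli via the uniform Lipschitz bound near $\{u=0\}$, strong non-degeneracy to control both the $L^1_{loc}$ convergence of $\chi_{\{u_{r_j}>0\}}$ and the Hausdorff convergence of free boundaries, the standard gluing argument in an annulus to pass the almost-minimality inequality to the limit, and the almost-monotone Weiss energy $W(u,r)+Cr^\beta$ to conclude that the blow-up is $1$-homogeneous), and you correctly identify the one adaptation needed over the pure minimizer case, namely the almost-monotonicity version of Weiss's formula as in \cite{DaET}.
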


\begin{rem}
Using the fact that global minimizers have Lipschitz constant less than $1$ (see \cite{CJK}), one can show via Lemma \ref{iterate} and Remark \ref{k=2}, that almost minimizers have Lipschitz constant less than 2 in a $r_0$ neighborhood around a free boundary point with $r_0$ depending on $n$, $\kappa$, $\beta$ and $\|u\|_{L^2}$. In other words we can take $K=2$ in statement of Theorem \ref{LIP}.
\end{rem}

\section{Partial regularity of the free boundary} 

\subsection{Almost minimizers as viscosity solutions}

In this subsection we show that almost minimizers satisfy the comparison principle with appropriate families of sub and 
supersolutions of the classical one-phase free boundary problem. The difference with the infinitesimal case is that,  in order to reach a contradiction   
we need to specify the size of the neighborhood around the contact point between the solution and an explicit barrier.

\begin{lem}[Subsolution]\label{sub_property}
Let $u$ satisfy \eqref{3000} and let $P$ be a quadratic polynomial such that 
$$\|D^2 P\| \le 1, \quad \triangle P \ge \mu,$$
for some $\mu \le \mu_0$ small. Assume that 
$$\mbox{either $u>0$ or $|\nabla P| \ge 1+\mu$ in $B_1$.}  $$
Then $P$ cannot be below $u$ in $B_1$ and touch $u$ by below at a point in $B_{1/2}$ if $\sigma \le \mu^{n+3}$.
\end{lem}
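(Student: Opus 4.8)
The plan is to argue by contradiction, assuming $P \le u$ in $B_1$ with equality at some $x_0 \in B_{1/2}$, and to derive a violation of the energy almost-minimality \eqref{3000} by constructing a competitor that lowers the Dirichlet energy (and does not increase the measure term) by an amount that beats $\sigma$. First I would reduce to the case where $u>0$ in $B_1$: if instead $|\nabla P| \ge 1+\mu$ on $B_1$, then since $P$ touches $u$ from below at $x_0$ and $u$ is Lipschitz with $\|\nabla u\|_{L^\infty} \le K$, the touching forces $u$ to be strictly positive in a definite neighborhood of $x_0$ — indeed near $x_0$ we have $u \ge P \ge P(x_0) + \nabla P(x_0)\cdot(x-x_0) - \tfrac12|x-x_0|^2$, and the steep slope of $P$ together with $P(x_0)=u(x_0)\ge 0$ keeps $u$ away from zero in a ball whose radius depends only on $\mu$ — so on that smaller ball we are back in the first case (after a harmless rescaling absorbed into relabeling $\sigma$ via Remark \ref{rem01}).

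Next, with $B_\rho(x_0) \subset \{u>0\}$ for a universal-times-$\mu$ radius $\rho$, I would use $P$ as (a piece of) a subsolution: the key point is that $\triangle P \ge \mu > 0$, so $P$ is a strict subsolution of the Laplace equation, and $u \ge P$ with contact at the interior point $x_0$. The natural competitor is $v := \max\{u, P + \epsilon\varphi\}$ inside $B_\rho(x_0)$ and $v = u$ outside, where $\varphi$ is a fixed smooth bump supported in $B_\rho(x_0)$ with $\varphi(x_0)>0$; equivalently, one compares $u$ to its harmonic replacement and uses Lemma \ref{replace} (resp.\ Lemma \ref{lhi}) to say $u$ stays within $c(\sigma)$ of a harmonic function, while a strict subsolution touching from below must separate. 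Concretely: let $h$ be the harmonic replacement of $u$ in $B_\rho(x_0)$; since $\triangle(P) \ge \mu$ and $\triangle h = 0$ with $P \le u = h$ on $\partial B_\rho(x_0)$, the maximum principle gives $h - P \ge c\mu\rho^2$ on $B_{\rho/2}(x_0)$, in particular $h(x_0) - u(x_0) = h(x_0) - P(x_0) \ge c\mu\rho^2$. On the other hand Lemma \ref{replace} gives $\|u - h\|_{L^\infty(B_{\rho/2}(x_0))} \le c(\sigma) \to 0$. Choosing $\sigma \le \mu^{n+3}$ makes $c(\sigma)$ (which is $\sim \sigma^{1/(n+2)}$ after the scaling that turns $B_\rho$ into $B_1$, so $\sim (\sigma\rho^{-n})^{1/(n+2)} \lesssim \mu^{(n+3)/(n+2) - n/(n+2)}\rho^{-\text{something}}$, i.e.\ a small power of $\mu$ beating $\mu\rho^2$) strictly smaller than $c\mu\rho^2$, a contradiction.

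The main obstacle I expect is bookkeeping the scaling exponents so that the threshold $\sigma \le \mu^{n+3}$ is exactly what makes $c(\sigma)$ beat the separation $c\mu\rho^2$: one must track that rescaling $B_\rho(x_0)$ to the unit ball turns $\sigma$ into $\sigma\rho^{-n}$ (Remark \ref{rem01}), that $\rho$ itself is a universal multiple of $\mu$ (from the reduction step, in the steep-gradient case) or of order $1$ (in the $u>0$ case), and that $P$ being a strict subsolution with $\triangle P \ge \mu$ and $\|D^2P\|\le 1$ gives a clean $c\mu\rho^2$ lower bound on $h-P$ at the center. A secondary point requiring care is verifying that the competitor $v = \max\{u, P+\epsilon\varphi\}$ does not increase the measure term $|\{v>0\}|$: this is immediate since $v \ge u \ge 0$ implies $\{v>0\} \supseteq \{u>0\}$ in the wrong direction — so instead one should phrase the comparison directly via harmonic replacement as above, where $u>0$ on the relevant ball makes $\chi_{\{u>0\}} = \chi_{\{v>0\}} = 1$ identically and the measure terms cancel, leaving only the Dirichlet energies, which is precisely the setting of Lemma \ref{replace}. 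With that, the contradiction is clean and the proof concludes.
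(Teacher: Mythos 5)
Your argument handles only the interior case and incorrectly reduces the second (steep-gradient) case to it, missing the crux of the lemma. You claim that when $|\nabla P|\ge 1+\mu$ and $P$ touches $u$ from below at $x_0$, the steep slope ``keeps $u$ away from zero in a ball whose radius depends only on $\mu$.'' This is false: the touching point may well be a free boundary point $x_0\in F(u)$ with $u(x_0)=P(x_0)=0$, and then $u$ vanishes on a set accumulating at $x_0$ (the bound $u\ge P$ only forces $u>0$ on the side where $P>0$, roughly a half-ball). This is exactly the interesting case — otherwise one would not need the gradient threshold $|\nabla P|\ge 1+\mu$ at all, and the lemma would just be an interior elliptic statement. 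Once you wrongly assume $u>0$ near $x_0$, the measure term $\chi_{\{\cdot>0\}}$ drops out of the energy comparison and you never see the free boundary condition $|\nabla u|=1$, which is precisely what $|\nabla P|\ge 1+\mu$ is supposed to be tested against.

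The paper's proof of the steep-gradient case is genuinely different and cannot be bypassed by harmonic replacement. It perturbs $P$ to $\bar P=P+\tfrac{\mu}{4n}(1-|x|^2)$, forms $u_{\min}=\min\{u,\bar P^+\}$ and $u_{\max}=\max\{u,\bar P^+\}$, and rewrites the almost-minimality (with competitor $u_{\max}$) as $J(u_{\min},B_1)-J(\bar P^+,B_1)\le \sigma$. The lower bound on the left-hand side — which is where the hypothesis $|\nabla P|\ge 1+\mu$ enters — comes from showing $\bar P^+$ minimizes the perturbed functional $\int |\nabla v|^2+\chi_{\{v>0\}}+\tfrac{\mu}{2}(v-\bar P^+)$ among $0\le v\le\bar P^+$ with boundary data $\bar P^+$; the proof of that auxiliary minimality uses a sliding comparison with the family $(\bar P+t)^+$ of classical subsolutions to the one-phase problem, which needs $|\nabla\bar P|>1$. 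Your proposal has no analogue of this step; you would need to either reproduce it or find another way to control the measure contribution near the free boundary. Your treatment of the first case ($u>0$ in $B_1$) via harmonic replacement and Lemma \ref{replace} is correct and matches the paper.
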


We remark that the subsolution Lemma \ref{sub_property} is used to provide a comparison principle for a function $u$ satisfying \eqref{3000}. Precisely, if $P$ as in Lemma \ref{sub_property} is such that $|\nabla P| \geq 1+ \mu$ and $u \geq P$ in say $B_1 \setminus B_{1/2}$, we can conclude that $u \geq P$ in $B_{1/2}.$
More generally, one way to apply Lemma \ref{sub_property} is given in the following form of the comparison principle for almost minimizers.

\begin{cor}[Comparison principle]\label{co_1}
Let $u$ satisfy \eqref{3000} and
 $$\mbox{$u \ge P$ in a $\delta$-neighborhood of $\partial \mathcal U$ of some domain $\mathcal U \subset B_1$,}$$ 
 for some quadratic polynomial $P$ with $\|D^2 P\| \le \delta^{-1}$, $\triangle P \ge \mu$. Assume that in $\mathcal U$ either $u>0$ or $|\nabla P| \ge 1+\mu$. If $$\mu^{n+3} \ge C(K, \delta) \, \, \sigma, \quad \mbox{ then} \quad u \ge P \quad \mbox{in $\mathcal U$}.$$
\end{cor}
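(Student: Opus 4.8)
The plan is to reduce the Comparison Principle (Corollary~\ref{co_1}) to the Subsolution Lemma (Lemma~\ref{sub_property}) via a sliding argument. First I would rescale: cover $\mathcal U$ and the polynomial $P$ by a controlled number of balls $B_r(x_i)$ with $r=r(\delta)$ small enough that on each such ball $P$ differs from its first-order Taylor expansion by at most (a fixed small multiple of) $\mu r^2$, and so that the rescaled function $u_{r,x_i}(x):=u(x_i+rx)/r$ still satisfies \eqref{3000} with $\sigma$ replaced by $r^{-n}\sigma$ (this is exactly Remark~\ref{rem01}). The hypothesis $\mu^{n+3}\ge C(K,\delta)\sigma$ is precisely what guarantees that after this rescaling the condition $\sigma_{\mathrm{new}}\le \mu^{n+3}$ needed to invoke Lemma~\ref{sub_property} still holds, provided $C(K,\delta)$ absorbs the factor $r^{-n}$ and the Lipschitz-bound rescaling.

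Next I would run the continuity/sliding method. Consider the family of polynomials $P_t:=P-t$ for $t\ge 0$. For $t$ large, $P_t<u$ everywhere in $\overline{\mathcal U}$ (since $u\ge 0$ and $P$ is bounded). Let $t_0:=\inf\{t\ge 0:\ P_t\le u\text{ in }\mathcal U\}$; the goal is to show $t_0=0$. If $t_0>0$, then by continuity $P_{t_0}\le u$ in $\overline{\mathcal U}$ with equality at some point $x_\ast\in\overline{\mathcal U}$. By the $\delta$-neighborhood hypothesis, $u\ge P\ge P_{t_0}$ there and in fact $u\ge P > P_{t_0}$ strictly on the $\delta$-neighborhood of $\p\mathcal U$ (for $t_0>0$), so $x_\ast$ lies in the interior, at distance $\ge \delta$ from $\p\mathcal U$. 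Now localize at $x_\ast$: choose the ball $B_r(x_\ast)\subset\mathcal U$ from the covering, rescale, and note that the rescaled $P_{t_0}$ agrees to first order with a quadratic polynomial $\tilde P$ satisfying $\|D^2\tilde P\|\le 1$ and $\triangle\tilde P=r^2\triangle P\ge r^2\mu=:\tilde\mu$ — here one must be a little careful and instead keep the genuine quadratic $P$ rather than its Taylor polynomial, since $\triangle$ is needed; so really one rescales $P$ itself, getting $\|D^2(P\text{ rescaled})\|=r^2\|D^2P\|\le r^2\delta^{-1}\le 1$ and Laplacian $\ge r^2\mu$. The barrier $\tilde P$ touches $u_{r,x_\ast}$ from below at $0\in B_{1/2}$, and in $B_1$ we have either $u>0$ (inherited from $\mathcal U$) or $|\nabla P|\ge 1+\mu$, which rescales to $|\nabla(P\text{ rescaled})|=r|\nabla P|$ — this does \emph{not} rescale correctly, so one must not rescale the gradient; instead apply Lemma~\ref{sub_property} in its natural form on $B_r(x_\ast)$ directly, i.e. use the $r$-scaled version of the Subsolution Lemma where $\|D^2P\|\le r^{-1}\cdot(\text{something})$...

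Let me restate cleanly: the cleanest route is to apply Lemma~\ref{sub_property} to the rescaled data $(u_{r,x_\ast},\ P_{r,x_\ast})$ where $P_{r,x_\ast}(x):=\big(P_{t_0}(x_\ast+rx)-P_{t_0}(x_\ast)\big)/r$. Then $\|D^2 P_{r,x_\ast}\|=r\|D^2P\|\le r\delta^{-1}\le 1$ for $r\le\delta$, $\triangle P_{r,x_\ast}=r\triangle P\ge r\mu=:\tilde\mu$, and on $B_1$ either $u_{r,x_\ast}>0$ or $|\nabla P_{r,x_\ast}|=|\nabla P_{t_0}(x_\ast+r\cdot)|\ge 1+\mu\ge 1+\tilde\mu$; moreover $P_{r,x_\ast}\le u_{r,x_\ast}$ in $B_1$ with a touching point at $0\in B_{1/2}$ (after subtracting the matching constant). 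Since $u_{r,x_\ast}$ satisfies \eqref{3000} with $\sigma_{\mathrm{new}}=r^{-n}\sigma$, Lemma~\ref{sub_property} gives a contradiction as long as $\tilde\mu\le\mu_0$ and $r^{-n}\sigma\le\tilde\mu^{n+3}=(r\mu)^{n+3}$, i.e. $\mu^{n+3}\ge r^{-n-(n+3)}\sigma=r^{-(2n+3)}\sigma$; choosing $r=r(\delta)$ and setting $C(K,\delta):=r(\delta)^{-(2n+3)}$ (times any $K$-dependent constant from the Lipschitz rescaling and from shrinking $\mu$ to land below $\mu_0$) makes the hypothesis $\mu^{n+3}\ge C(K,\delta)\sigma$ exactly the thing we need. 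The contradiction forces $t_0=0$, hence $u\ge P$ in $\mathcal U$, which is the claim. The main obstacle is bookkeeping the scaling: making sure the single constant $C(K,\delta)$ simultaneously controls (i) the $r^{-n}$ blow-up of $\sigma$ from Remark~\ref{rem01}, (ii) the loss $r^{n+3}$ in the threshold $\tilde\mu^{n+3}$, (iii) the requirement $r\le\delta$ so that $\|D^2 P_{r,x_\ast}\|\le 1$, and (iv) the requirement $\tilde\mu=r\mu\le\mu_0$, while keeping the covering of $\mathcal U$ finite and all rescaled balls inside $B_1$. None of these is deep, but the argument must be stated so that $r$ is fixed by $\delta$ first and $\sigma$ is then taken small relative to $\mu^{n+3}$ with the combined constant.
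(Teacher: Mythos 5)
Your cleaned-up argument is correct and is essentially the paper's proof: slide $P$ down to first touching at some $x_\ast$ (necessarily at distance $\ge\delta$ from $\partial\mathcal U$ by the boundary hypothesis), rescale $B_\delta(x_\ast)$ to $B_1$ via $x\mapsto P(x_\ast+\delta x)/\delta$ so that $\tilde\sigma=\delta^{-n}\sigma$, $\tilde\mu=\delta\mu$, $\|D^2\tilde P\|\le 1$, $\nabla\tilde P=\nabla P$, and contradict Lemma~\ref{sub_property}; the hypothesis $\mu^{n+3}\ge C(K,\delta)\sigma$ with $C(K,\delta)\sim\delta^{-(2n+3)}$ is exactly the condition $\tilde\sigma\le\tilde\mu^{n+3}$. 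The opening covering discussion is superfluous (one ball around the touching point suffices) and the mid-proof false start on how the gradient rescales is wrong, but you correct both and land on the right argument.
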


\begin{proof}
Otherwise (a vertical translation of) $P$ touches $u$ by below at some point $x_0 \in \mathcal U$ at distance greater then $\delta$ from $\p \mathcal U$. Now we rescale this picture from $B_\delta(x_0)$ to $B_1$ and contradict Lemma \ref{sub_property}. Indeed, the rescaled functions $\tilde u$, $\tilde P$ satisfy the hypotheses of lemma in $B_1$ with $\tilde P$ touching $\tilde u$ by below at the origin and with constants (see Remark \ref{rem01}) $$\tilde \sigma=\delta^{-n} \sigma \quad, \quad  \tilde \mu = \mu \delta.$$
\end{proof}

Similarly we obtain the following viscosity supersolution lemma which gives a version of the Corollary above (comparison principle) for polynomials $P$ lying above $u$.

\begin{lem}[Supersolution]\label{sup_property} Let $u$ satisfy \eqref{3000}
and let $P$ be a quadratic polynomial such that 
$$\|D^2 P\| \le 1, \quad \triangle P \le - \mu,$$
for some $\mu>0$ small. Assume that 
$$\mbox{either $u>0$ or $|\nabla P| \le 1-\mu$ in $B_1$.}  $$
Then $P^+$ cannot be above $u$ in $B_1$ and touch $u$ by above at a point in $B_{1/2} \cap \overline{\{P>0\}}$ if $\sigma \le \mu^{n+3}$.
\end{lem}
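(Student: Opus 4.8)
The plan is to prove Lemma~\ref{sup_property} by mirroring the strategy used for the subsolution Lemma~\ref{sub_property}. The key idea is that the harmonic replacement $v$ of $u$ in a suitable ball stays uniformly close to $u$ (Lemma~\ref{replace}), yet a polynomial with strictly negative Laplacian cannot be touched from above by a function that is ``almost harmonic'' without pushing the touching point far into the interior, where it contradicts the maximum-principle-type estimate. I would argue by contradiction: suppose a vertical translate of $P^+$ lies above $u$ in $B_1$, touches $u$ from above at some $x_0 \in B_{1/2} \cap \overline{\{P>0\}}$, and $\sigma \le \mu^{n+3}$.

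First I would localize. Pick a small universal radius $r$ so that $B_r(x_0) \subset B_{3/4}$; after the rescaling $u_r(x) = u(x_0 + rx)/r$ (Remark~\ref{rem01}), which only worsens $\sigma$ by a controlled factor $r^{-n}$ absorbed into the hypothesis $\sigma \le \mu^{n+3}$, we may assume $x_0 = 0$, $P(0) = u(0)$, and $P \ge u$ in $B_1$. Two cases arise depending on which alternative holds. If $|\nabla P| \le 1-\mu$ in $B_1$, then near $0$ the graph of $P$ has slope bounded away from $1$; I would use $V = \min\{u, P^+\}$ — equivalently, I would lower $P$ slightly and compare — as a competitor in the energy inequality \eqref{3000}, exploiting that replacing $u$ by something below $P^+$ changes $|\{u>0\}|$ by a definite amount proportional to the measure of the region where $u$ sat strictly below $P^+$ (the ``extra positivity set'' that gets eliminated), while the Dirichlet energy changes in a controlled way. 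The gain from the measure term must beat the loss $C\sigma$, forcing $\sigma$ large, a contradiction. If instead $u>0$ in $B_1$, then I would compare $u$ with its harmonic replacement $v$ in $B_1$: by Lemma~\ref{replace}, $\|u - v\|_{L^\infty(B_{1/2})} \le c(\sigma)$, and since $v$ is harmonic while $\triangle P \le -\mu$, the function $v - P$ is subharmonic and vanishes at $0$ from the correct side — more precisely $v \le u \le P$ gives $v - P \le 0$ with equality up to $c(\sigma)$ at $0$. But the mean value property gives $\fint_{\partial B_{1/2}} (v - P) \ge (v-P)(0) + c\mu$ for $c$ universal, since $\triangle(v-P) \ge \mu$. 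This says $v$ exceeds $P$ somewhere on $\partial B_{1/2}$ by at least $c\mu - c(\sigma)$, hence $u$ exceeds $P$ there by at least $c\mu - 2c(\sigma) > 0$ once $\sigma \le \mu^{n+3}$ makes $c(\sigma) \le \tfrac{c\mu}{4}$ (recalling $c(\sigma) = C\sigma^{1/(n+2)}$ from Lemma~\ref{replace}), contradicting $P \ge u$ in $B_1$.

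The main obstacle I anticipate is the case where $0 \in \partial\{P>0\}$, i.e.\ the touching happens at the free boundary of $P$ itself, since there $P^+$ is not smooth and the slope condition $|\nabla P| \le 1-\mu$ is what must do the work in place of the positivity of $u$. Here one cannot simply invoke the harmonic replacement argument because $u$ may vanish near $0$; instead the correct competitor is $V = \min\{u, P^+\}$, and one must carefully account for the interplay between the Dirichlet energy gained by cutting $u$ down to $P^+$ (which, because $|\nabla P^+| \le 1-\mu < 1$ on $\{P>0\}$, is strictly less than the measure of the region removed from $\{u>0\}$) and the measure term. This is exactly the computation that forces the sign $\triangle P \le -\mu$ to matter only through the slope at the free boundary, and it parallels the proof of Lemma~\ref{sub_property} in reverse — I expect the bookkeeping of the energy competitor, rather than any conceptual difficulty, to be the delicate point, together with tracking how $\mu$ and $\sigma$ scale under the localization.
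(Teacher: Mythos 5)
Your case split (either $u>0$ or $|\nabla P| \le 1-\mu$) matches the paper's, and your harmonic-replacement argument for the case $u>0$ is essentially the paper's (and correct): $v - P$ has Laplacian $\ge \mu$, so the mean value inequality over $\partial B_{1/2}$ produces a point where $v$, hence $u$, exceeds $P$ once $c(\sigma) \ll \mu$. However, the main case $|\nabla P| \le 1-\mu$ — the one the paper spends the entire proof on, and the only case that handles a touching point on $\partial\{P>0\}$ — contains a genuine gap.

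The competitor you propose, $V = \min\{u, P^+\}$, is trivially equal to $u$ in the contradiction set-up, since you have already assumed $u \le P^+$ in $B_1$. It therefore yields no information. The paper instead perturbs $P$ \emph{downward} to $\bar P := P - \tfrac{\mu}{4n}(1-|x|^2)$, which agrees with $P$ on $\partial B_1$ but sits strictly below it inside, so that $\bar P^+$ and $u$ genuinely cross; only then is $u_{\min} = \min\{u,\bar P^+\}$ a nontrivial competitor, and the energy inequality becomes $J(u_{\max},B_1) - J(\bar P^+,B_1) \le \sigma$ with $u_{\max} = \max\{u,\bar P^+\}$. Your phrase ``equivalently, I would lower $P$ slightly and compare'' gestures at this, but the bookkeeping you then describe — ``the gain from the measure term must beat the loss $C\sigma$'' — is exactly the assertion that has to be proved, and it is not automatic: the Dirichlet energy of $u_{\max}$ may well be larger than that of $\bar P^+$, so there is no a priori sign for the quantity $J(u_{\max}) - J(\bar P^+)$ from the positivity term alone. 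The paper's key step, which your sketch omits, is to show that $\bar P^+$ \emph{minimizes} the modified functional $\int_{B_1}\bigl(|\nabla v|^2 + \chi_{\{v>0\}} + \tfrac{\mu}{2}(\bar P^+ - v)\bigr)\,dx$ among competitors $v \ge \bar P^+$ fixed on $\partial B_1$, by sliding the one-parameter family of classical supersolutions $(\bar P + t)^+$, $t \ge 0$, from above; this is where the two hypotheses $\triangle \bar P < -\mu/2$ and $|\nabla\bar P| < 1-\mu/2$ both enter. That minimization converts the energy inequality into the quantitative bound $\tfrac{\mu}{2}\int_{B_1}(u_{\max}-\bar P^+)\,dx \le \sigma$. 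Finally, to show the left side is at least $c\mu^{n+2}$, the paper splits according to whether $u(x_0)$ is small or of order $\mu$: in the first sub-case, $\bar P^+ \equiv 0$ near $x_0$ and one needs \emph{non-degeneracy} of $u$ (Lemma \ref{snd}) to produce a ball of radius $\sim\mu$ on which $u \ge c\mu$. This non-degeneracy input is entirely absent from your sketch, and it is precisely what makes the free-boundary touching case work — Lipschitz continuity alone, which is all you invoke, does not give a lower bound on $u$ near a free boundary point.
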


Next we provide the proofs of Lemma \ref{sub_property} and \ref{sup_property}.

\

{\it Proof of Lemma $\ref{sub_property}$.}
If $u>0$ then the conclusion follows easily from Lemma \ref{replace}. 

We assume that $|\nabla P| \ge 1+ \mu$, $u \ge P$ in $B_1$ and $u(x_0)=P(x_0)$ for some $x_0$ in $B_{1/2}$. Since $D^2 P$ is bounded, $u$ is Lipschitz and $P$ touches $u$ by below at $x_0$, we find that $P$ is uniformly Lipschitz continuous in $B_1$. Let 
$$\bar P(x):=P(x) + \frac{\mu}{4n}(1-|x|^2),$$
so that $\triangle \bar P \ge \frac \mu2$, $|\nabla \bar P| \ge |\nabla P|- \frac \mu 2$ and by the Lipschitz continuity of $u$ and $\bar P^+$,
\begin{equation}\label{204}
\bar P^+ - u \ge \frac{ \mu}{8n} \quad \mbox{in} \quad B_{c \mu}(x_0).
\end{equation}
Set $$u_{max}:=\max\{u, \bar P^+\}, u_{min}:=\min\{u, \bar P^+\}$$ and notice that 
$$u_{max}= u, \quad u_{min}=\bar P^+ \quad \text{on $\p B_1$}.$$
Then we have
$$J(u, B_1) \le J(u_{max},B_1) + \sigma,$$
or equivalently
\begin{equation}\label{umin} J(u_{min}, B_1)-J(\bar P^+, B_1)  \le  \sigma.\end{equation}
We claim that 
$$J(u_{min}, B_1)-J(\bar P^+, B_1) \geq \frac \mu 2 \int_{B_1}(\bar P^+-u_{min}) \ dx.$$
Combining this claim with \eqref{204} and \eqref{umin} we conclude that
$$c \mu^{n+2} \leq  \sigma,$$ and get a contradiction for $\sigma \leq \mu^{n+3}$ and $\mu$ small.

We are left with the proof of the claim. For this we minimize the functional 
$$\int_{B_1}|\nabla v|^2 + \chi_{\{v>0\}} + \frac \mu 2(v - \bar P^+) \, \, dx$$
among all competitors $0 \leq v \leq \bar P^+$ which coincide with $\bar P^+$ on $\p B_1$, and claim that $\bar P^+$ is the minimizer. This will imply the claim as $u_{min}$ is an admissible competitor.

In the region where the minimizer $v$ is strictly below $\bar P^+$, it satisfies (in the viscosity sense)
$$\Delta v= \frac{\mu}{4} \quad \text{in $\{v>0\}$,}  \quad |\nabla v|^2= 1 \quad \text{on $F(v)$}.$$
 This means that $v$ satisfies the comparison principle with the continuous family of classical subsolutions $(\bar P+ t)^+$ as we increase $t$ from a large negative constant up to $t = 0$, and we obtain $v \equiv \bar P^+$ as desired. Here we used that $\Delta \bar P > \frac \mu 4$ and $|\nabla \bar P| \geq 1+\frac{\mu}{2} > 1.$ 

\qed

{\it Proof of Lemma \ref{sup_property}.}
The proof follows the lines of the previous lemma, thus we only sketch the argument. As before it suffices to assume $|\nabla P| \le 1- \mu$, $u \leq P^+$ in $B_1$ and $u(x_0)=P^+(x_0)$ with 
$x_0 \in B_{1/2} \cap \overline{\{P>0\}}.$
Denote 
$$\bar P:= P(x) - \frac{\mu}{4n}(1-|x|^2).$$
The energy inequality reads 
$$J(u, B_1) \le J(u_{min},B_1) +\sigma,$$
hence
\begin{equation}\label{comp22}J(u_{max},B_1)-J(\bar P^+, B_1) \leq  \sigma,\end{equation}
where
$$u_{min}:=\min\{u, \bar P^+\}, \quad u_{max}:= \max\{u, \bar P^+\}.$$
As above, we can show that $\bar P^+$ is the minimizer of the functional
$$\int_{B_1}\left( |\nabla v|^2 + \chi_{\{v>0\}} + \frac \mu 2(\bar P^+ -v) \right)\, \, dx,$$
among all competitors $v \geq \bar P^+$ which coincide with $\bar P^+$ on $\p B_1$, by using that $(\bar P + t)^+$ with $t \ge 0$ is a continuous family of classical comparison supersolutions.

This implies that 
$$J(u_{max},B_1)-J(\bar P^+, B_1)\geq \frac \mu 2 \int_{B_1} (u_{max} - \bar P^+) \ dx.$$
It remains to show that the right hand side is greater than $c \mu ^{n+2}$.

If $u(x_0)< \frac{\mu}{32n}$, we can easily conclude from the fact that $u$ touches $P^+$ at $x_0$ and $|\nabla P|\le 1$, that $P < \frac{\mu}{16n}$ in $B_{\frac{\mu}{32}n}(x_0).$
This implies that  $\bar P^+ \equiv 0$ in $B_{\frac{\mu}{32n}}(x_0)$, then by the Lipschitz continuity and the nondegeneracy of $u$, we conclude that 
$$u_{max} - \bar P^+= u \geq c(K) \mu, \quad \text{in $B\subset B_{\frac{\mu}{32n}}(x_0)$},$$
with $|B| \sim \mu^n.$ 

If $u(x_0) \geq \frac{\mu}{32n}$, then using that $u(x_0)=P(x_0)$, the Lipschitz continuity of $u$ and $\bar P^+$, we get that 
$$u_{max}-\bar P^+ = u -\bar P^+\geq c \mu \quad \text{in $B_{c'\mu}(x_0)$}.$$
\qed

\subsection{Partial regularity of the free boundary} In this subsection we prove our main regularity result for the free boundary of almost minimizers. In the context of minimizers this result is contained in \cite{AC}. The general case of  viscosity solutions is due to \cite{C1,C2}, with a different proof provided in \cite{D}. Our proof for almost minimizers will rely on the techniques in \cite{D}.

We have the following theorem.

\begin{thm}[Flatness implies regularity.] \label{flat} Let $u$ be an almost minimizer to $J$ in $B_1$ (with constant $\kappa$ and exponent $\beta$), and let $|\nabla u| \leq K$.
Assume $|u-x_n^+| \le \eps_0$ in $B_1$ and $0 \in F(u):=\p\{u>0\} \cap B_1.$ If $\eps_0$ and $\kappa$ are small enough depending on $\beta$ and $K$, then $F(u)$ is $C^{1,\alpha}$ in a neighborhood of $0$, for some $\alpha \leq \beta/(n+4)$.
\end{thm}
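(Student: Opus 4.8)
\textbf{Proof proposal for Theorem \ref{flat} (flatness implies regularity).}

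The plan is to run an improvement-of-flatness iteration in the spirit of \cite{D}, using the viscosity comparison tools (Lemma \ref{sub_property}, Lemma \ref{sup_property}, and Corollary \ref{co_1}) already established in this section to replace the infinitesimal viscosity arguments there. The core statement to prove is the one-step improvement: there exist universal $\rho \in (0,1)$ small, $\eps_0$ small, and a dimensional constant, such that if $\{x_n \le -\eps_0\} \cap B_1 \subset \{u=0\}$ essentially and $|u - x_n^+| \le \eps_0$ in $B_1$ with $0 \in F(u)$, and $\kappa$ is sufficiently small (say $\kappa^{n+3} \ll \eps_0^{?}$), then in $B_\rho$ the function $u$ is $\eps_0 \rho^{\alpha'}$-flat in a possibly rotated direction $\nu$ with $|\nu - e_n| \le C\eps_0$. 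Iterating this on the rescalings $u_\rho(x) = \rho^{-1} u(\rho x)$ — which by Remark \ref{rem01} are again almost minimizers with rapidly shrinking $\sigma_{\rho^k} = \rho^{-nk}\kappa\rho^{k\beta}$, and here we must track that the improvement exponent $\alpha \le \beta/(n+4)$ is chosen precisely so that the accumulated error from $\sigma$ stays controlled — produces at every scale a direction $\nu_k$ with $|\nu_{k+1}-\nu_k| \le C\eps_0\rho^{k\alpha}$, hence a limiting normal and the Hölder modulus $|\nu_k - \nu_\infty| \le C\eps_0\rho^{k\alpha}$; this gives $F(u) \in C^{1,\alpha}$ near $0$ by the standard argument that a flat free boundary whose flatness direction is Hölder across dyadic scales is a $C^{1,\alpha}$ graph.

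For the one-step improvement I would follow the classical compactness/linearization scheme. Suppose not: there is a sequence $u_k$ of almost minimizers, $\eps_k \to 0$, $|u_k - x_n^+| \le \eps_k$, $\kappa_k$ with $\kappa_k^{n+3}/\eps_k^{N} \to 0$ for a suitable power $N$, for which the improvement fails at every $\rho$. Consider the normalized functions
$$
w_k(x) := \frac{u_k(x) - x_n}{\eps_k} \quad \text{on } \{u_k > 0\} \cap B_{1/2},
$$
(the ``linearized'' quantity). The key compactness step is to show, using the Lipschitz bound $|\nabla u_k| \le K$, the non-degeneracy (Lemma \ref{snd}), and the harmonic-approximation Lemma \ref{replace}, that the graphs of $w_k$ have a Hölder-in-probability oscillation decay — this is exactly the Harnack-type ingredient, and in the almost-minimizer setting it is supplied by applying Corollary \ref{co_1} with carefully chosen quadratic polynomials $P = x_n + \text{(small correction)}$, noting that the hypothesis $\kappa^{n+3} \ge C(K,\delta)\sigma$ is met because $\sigma_k$ is forced to shrink much faster than the flatness $\eps_k$. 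This yields (up to subsequence) a uniform limit $w_k \to w_\infty$ on compact subsets of $B_{1/2} \cap \{x_n > 0\}$. One then shows $w_\infty$ solves the linearized problem: $\Delta w_\infty = 0$ in $B_{1/2} \cap \{x_n > 0\}$ and $\partial_{x_n} w_\infty = 0$ on $B_{1/2} \cap \{x_n = 0\}$ — the interior equation again from Lemma \ref{replace} (harmonic approximation in the positivity set), and the Neumann condition from the subsolution/supersolution lemmas, which say $u_k$ cannot be touched from below (resp.\ above) by $(x_n + \text{quadratic} + t)^+$ barriers with slightly larger (resp.\ smaller) gradient — translating in the limit to the one-sided, hence two-sided, vanishing of the normal derivative of $w_\infty$. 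By standard elliptic regularity $w_\infty$ is $C^\infty$ up to the boundary after even reflection, so $w_\infty(x) = w_\infty(0) + q' \cdot x' + O(|x|^2)$ with $q' \in \R^{n-1}$, and absorbing $q'$ into a rotation of $e_n$ contradicts the assumed failure of improvement for small enough $\rho$.

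The main obstacle I expect is the compactness step — establishing that $w_k$ is precompact with a uniform Hölder modulus near the free boundary. In the minimizer case this is a dichotomy/Harnack argument; here one must run it through the non-infinitesimal comparison principle of Corollary \ref{co_1}, which only applies when $\sigma$ is small relative to a power of the ``opening'' $\mu$ of the test polynomial. Since in the Harnack step $\mu$ is itself comparable to the current flatness $\eps_k$ (or a dyadic fraction of it), one needs the quantitative balance $\kappa_k^{n+3} \gg \eps_k^{-n} \cdot (\text{power of } \eps_k)$ to hold, and propagating this balance correctly through the dyadic iteration — so that at scale $\rho^k$ one still has $\sigma_{\rho^k} \le (\text{flatness at scale }\rho^k)^{n+3}$ — is precisely what pins down the constraint $\alpha \le \beta/(n+4)$ in the statement. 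A secondary technical point is handling the free-boundary (Neumann) condition for $w_\infty$ rigorously: one must ensure the touching points of the barriers actually lie in $B_{1/2} \cap \overline{\{P > 0\}}$ as required in Lemma \ref{sup_property}, which uses the non-degeneracy to guarantee the free boundary of $u_k$ stays within $O(\eps_k)$ of $\{x_n = 0\}$.
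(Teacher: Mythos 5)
Your proposal follows essentially the same route as the paper: reduce to a one-step improvement-of-flatness lemma proved by compactness of the linearized quantity $(u_k - x_n)/\eps_k$ (whose precompactness is a Harnack-type estimate obtained from Corollary~\ref{co_1}, and whose limit solves the Neumann problem via Lemmas~\ref{sub_property}--\ref{sup_property}), then iterate with the rescaling of Remark~\ref{rem01}, with the constraint $\alpha \le \beta/(n+4)$ arising from demanding that the rescaled $\sigma = \kappa\eta^{k\beta}$ stay dominated by the appropriate power of the flatness $\eps_0\eta^{k\alpha}$. This matches the paper's Lemmas~\ref{limp5}--\ref{limp6} and the verification of properties (P1)--(P2); the only discrepancies in your write-up are minor bookkeeping slips (e.g., the hypothesis of Corollary~\ref{co_1} reads $\mu^{n+3} \ge C(K,\delta)\sigma$, not $\kappa^{n+3}\ge\dots$, and the extra factor that upgrades $n+3$ to $n+4$ comes from the $\delta$-dependence absorbed when $\eps$ is small relative to $\delta$).
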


Theorem \ref{flat} follows easily from the improvement of flatness lemma below. Its proof is presented at the end of the section.

\begin{lem}\label{limp5}Let $u$ satisfy \eqref{3000}.
Assume that $|u-x_n^+| \le \eps$ in $B_1$, $0 \in F(u)$, and $\sigma$ in \eqref{3000} satisfies $\sigma \le \eps^{n+4}$. Given $\alpha \in (0,1)$ there exists $\eta$ depending on $\alpha$ such that 
\begin{equation}\label{eta}|u-(x \cdot \nu)^+| \le \eps \eta^{1+\alpha} \quad \quad { in} \quad  B_\eta\end{equation} for some unit direction $\nu$, provided that $\eps \le \eps_0(K, \alpha)$ is sufficiently small.
\end{lem}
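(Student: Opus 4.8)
The plan is to run the standard improvement-of-flatness argument in the Alt--Caffarelli / De Silva style, but replacing the infinitesimal viscosity equation by the comparison principle of Corollary \ref{co_1}. Suppose by contradiction that the conclusion fails: there is a sequence $u_k$ satisfying \eqref{3000} with $\sigma_k \le \eps_k^{n+4}$, $\eps_k \to 0$, $0 \in F(u_k)$, $|u_k - x_n^+| \le \eps_k$ in $B_1$, but for which no unit direction $\nu$ gives \eqref{eta} with the $\eta = \eta(\alpha)$ that will be fixed later (chosen so that $C_0 \eta^{\alpha} \le \frac12 \eta$ for the universal Harnack constant $C_0$ to come). Introduce the normalized ``flatness'' functions
$$
w_k(x) := \frac{u_k(x) - x_n^+}{\eps_k}, \qquad x \in B_1 \cap \{u_k > 0\}.
$$
First I would show that, up to a subsequence, the graphs of the $w_k$ (over $B_{1/2} \cap \{x_n > -1/4\}$, say) converge uniformly to the graph of a Hölder function $w$ which is harmonic in $B_{1/2} \cap \{x_n > 0\}$ and satisfies the Neumann condition $w_n = 0$ on $\{x_n = 0\}$ in the viscosity sense. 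The compactness is the technical heart: it rests on a Harnack-type inequality for the $w_k$, which is exactly where Lemma \ref{lhi} enters. Concretely, if at some interior point $w_k$ lies a definite amount $\mu$ above (resp.\ below) a hyperplane, one uses Lemma \ref{lhi} — applicable because the hypothesis $\sigma_k \le \eps_k^{n+4} \le (\eps_k\mu)^{n+3}$ (after the harmless rescalings of Remark \ref{rem01}) is met for $\eps_k$ small — to propagate this to a full subball, yielding the oscillation decay that gives equicontinuity of the $w_k$ in the positivity set; near the free boundary one instead uses the subsolution/supersolution Lemmas \ref{sub_property}, \ref{sup_property} applied to the barriers $x_n + \eps_k(\text{quadratic})$, whose Laplacian and gradient perturbations are of order $\eps_k$, so that the condition $\sigma_k \le \mu^{n+3}$ translates into $\sigma_k \lesssim \eps_k^{n+3}$, again guaranteed by the smallness of $\sigma_k$ relative to $\eps_k^{n+4}$. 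This is the step I expect to be the main obstacle, because one must carefully track how the non-infinitesimal nature of the comparison principle (the contact-neighborhood size depending on $\mu$) interacts with the scale $\eps_k$ of the perturbation — the exponent $n+4$ (one more than the $n+3$ in Lemma \ref{lhi}) is precisely the slack that makes it work.

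Once the limiting $w$ is identified as a viscosity solution of $\triangle w = 0$ in $B_{1/2}^+$, $w_n = 0$ on $B_{1/2}' \times \{0\}$, classical regularity for this Neumann problem (reflect $w$ evenly across $\{x_n = 0\}$ to get a harmonic function in $B_{1/2}$) gives that $w$ is $C^\infty$ up to $\{x_n = 0\}$, so
$$
|w(x) - w(0) - \nabla' w(0)\cdot x'| \le C |x|^2 \quad \text{in } B_\eta,
$$
and since $0 \in F(u_k)$ passes to $w(0) = 0$. Setting $\xi := \nabla' w(0) \in \R^{n-1}$ (the $x_n$-derivative being zero), we get $|w(x) - \xi \cdot x'| \le C_0 \eta^2 \le C_0 \eta^{1+\alpha}$ in $B_\eta$ — with $C_0$ universal — which unwound means
$$
\big| u_k(x) - x_n^+ - \eps_k\, \xi\cdot x' \big| \le \eps_k\, C_0 \eta^{1+\alpha} + o(\eps_k) \quad \text{in } B_\eta .
$$

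Finally I would absorb the correction $x_n + \eps_k \xi\cdot x'$ into a rotated half-plane: the unit vector $\nu_k := (\eps_k \xi, 1)/\sqrt{1 + \eps_k^2|\xi|^2}$ satisfies $(x\cdot\nu_k)^+ = (x_n + \eps_k\xi\cdot x')^+ + O(\eps_k^2)$ on $B_\eta$, so for $k$ large enough
$$
|u_k - (x\cdot\nu_k)^+| \le \eps_k\Big(C_0 \eta^{1+\alpha} + o(1)\Big) \le \eps_k\, \eta^{1+\alpha}
$$
in $B_\eta$, using the choice of $\eta$ so that $C_0 \eta^{\alpha} < \tfrac12$ (hence $C_0 \eta^{1+\alpha} < \tfrac12 \eta^{1+\alpha}$) together with $o(1) < \tfrac12 \eta^{1+\alpha}$. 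This contradicts the assumption that \eqref{eta} fails for $u_k$ with every unit direction, and the lemma follows. The only genuinely new ingredient relative to \cite{D} is the use of the non-infinitesimal comparison tools of Section 4.1 in place of the pointwise viscosity equation, and the bookkeeping of the $\sigma \le \eps^{n+4}$ threshold throughout the compactness step.
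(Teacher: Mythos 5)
Your proposal is correct and takes essentially the same route as the paper: compactness of the normalized flatness functions via the Harnack-type inequality (Lemma \ref{lhi}), identification of the limit as a viscosity solution of the Neumann problem via the subsolution/supersolution comparison tools of Section 4.1, and then the classical $C^{2}$ estimate plus a tilting of the half-plane to conclude, with the $\sigma \le \eps^{n+4}$ threshold providing the slack over the $\mu^{n+3}$ requirement at every scale. The paper merely packages these steps through the intermediate properties (P1)--(P2) and a separate compactness lemma (Lemma \ref{limp6}), but the substance is the same.
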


As mentioned before, the results in subsection 4.1 guarantee that the proof of Lemma \ref{limp5} follows along the lines of  the case of minimizers as in \cite{D}. We sketch the details in the following two subsections.

\subsection{Two properties}Define the $\eps$-scaled function
$$\bar u_\eps:= \frac 1 \eps (u - x_n) \quad \mbox{in the set $\{ u>0\} \cap B_1$.} $$
Next we state two properties (P1) and (P2) for the function $u$ which turn out to be sufficient for obtaining the approximation of $\bar u_\eps$ with solutions of the linearized Neumann problem
\begin{equation}\label{Ne}\triangle \bar u_0 =0 \quad \text{in $B_{1/2}^+$}, \quad \quad \partial _n \bar u_0=0 \quad \mbox{on $\{x_n=0\} \cap B_{1/2}$},\end{equation}
and for obtaining the improvement of flatness Lemma \ref{limp5}. These properties are written in terms of two small parameters $\delta$ and $\eps>0$. 

\

(P1) {\it Harnack inequality,} (see Lemma $3.3$ in \cite{D}.)

Given $\delta>0$, there exists $\eps_0=\eps_0(\delta)$ such that if $\eps \le \eps_0$ and for some constant $a$, $$u \ge l^+=(x_n+a)^+, \quad \mbox{ in $B_r(x_0) \subset B_1$,}$$ with $r \ge \delta$, $|a| \le \eps$ and 
$$\mbox{$u(y) \ge l^+(y) + \gamma \eps$ for some $y$ for which $B_{r/2}(y) \subset \{l^+>0\} \cap B_r(x_0)$},$$ and some $ \gamma \in [\delta,1] $, then $$u \ge (x_n + a + c \gamma \eps)^+ \quad \mbox{ in $B_{r/2}(x_0)$},$$ for some $c>0$ universal.

Similarly, the above holds when we replace $\ge$ by $\le$ and $\gamma$ by $-\gamma$.

\

(P2) {\it Viscosity property.}
Given $\delta>0$, there exists $\eps_0=\eps_0(\delta)$ such that if $\eps \leq \eps_0$
we cannot have $u(x_0)=P(x_0)$ and $u \ge P$ in $B_{\delta}(x_0) \subset B_1$  where $P$ is a quadratic polynomial such that $\|D^2 P\| \le \delta^{-1} \eps$, $\triangle P \ge \delta \eps$, and in the ball $B_{\delta}(x_0)$ either $u>0$ or $|\nabla P|>1+\delta \eps$.

Similarly, the above holds when $u \leq P$, $\Delta P \leq -\delta \eps$ and $|\nabla P| < 1-\delta\eps$.
\medskip

We explain why (P1) and (P2) suffice to obtain the improvement of flatness property as in \cite{D}. 

\begin{lem}\label{limp6}
Assume a family of functions $u$ satisfy properties (P1) and (P2) above. If $|u-x_n^+| \le \eps$ in $B_1$, $0 \in F(u)$, then 
$$|u-(x \cdot \nu)^+| \le \eps \eta^{1+\alpha} \quad \quad { in} \quad  B_\eta,$$
for some unit direction $\nu$, provided that $\eps \le \eps_1$ with $\eps_1$ depending on $n$, $\alpha$ and the dependence $\delta \mapsto \eps_0(\delta)$ that appears in properties $(P1), (P2).$

\end{lem}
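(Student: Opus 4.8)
The plan is to reduce Lemma~\ref{limp6} to the classical improvement-of-flatness scheme for the one-phase problem exactly as in \cite{D}, using (P1) and (P2) as the two substitutes for the Harnack inequality and the viscosity (comparison with quadratic barriers) property that drive that argument. First I would argue by compactness: suppose the conclusion fails, so there is a sequence $u_k$ satisfying (P1)--(P2) with parameters $\eps_k \to 0$, $|u_k - x_n^+| \le \eps_k$ in $B_1$, $0 \in F(u_k)$, but for which no unit vector $\nu$ yields \eqref{eta} at scale $\eta$. Introduce the $\eps_k$-scaled functions $\bar u_k := \eps_k^{-1}(u_k - x_n)$ defined on $\{u_k>0\}\cap B_1$, and regard their graphs as subsets of $B_1 \times \R$. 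The first key step is to show that (up to a subsequence) these graphs converge in the Hausdorff distance on compact subsets of $B_{1/2}$ to the graph of a Hölder-continuous function $\bar u_0$ on $B_{1/2}^+ \cup (\{x_n=0\}\cap B_{1/2})$.

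The equicontinuity needed for this compactness is precisely where property (P1) enters: iterating the Harnack inequality (P1) at dyadic scales $r \ge \delta$, in the standard way, gives an oscillation-decay estimate for $\bar u_k$ on balls away from, and up to, the free boundary, with a modulus that is uniform in $k$ once $\eps_k$ is small relative to the scale — this is the content of Lemma~3.3 and its corollaries in \cite{D}. The second key step is to identify the limit: I would show $\bar u_0$ solves the linearized Neumann problem \eqref{Ne}, i.e.\ $\triangle \bar u_0 = 0$ in $B_{1/2}^+$ and $\partial_n \bar u_0 = 0$ on $\{x_n=0\}\cap B_{1/2}$, in the viscosity sense. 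Interior harmonicity is routine (away from the free boundary $u_k$ is just harmonic, as $u_k>0$ there, or one uses Lemma~\ref{replace}-type approximation). The Neumann condition is exactly what (P2) is designed to yield: if a quadratic polynomial $Q$ touched $\bar u_0$ strictly from below at a boundary point $0$ with $\triangle Q > 0$ and $\partial_n Q < 0$, one would build, for large $k$, a polynomial $P = x_n + \eps_k Q$ touching $u_k$ from below in a fixed ball $B_\delta$, with $\|D^2 P\| \le \delta^{-1}\eps_k$, $\triangle P \ge \delta \eps_k$, and $|\nabla P| > 1 + \delta\eps_k$ (coming from $\partial_n Q<0$) wherever $u_k$ vanishes — contradicting (P2). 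The analogous argument with the reversed inequalities handles touching from above.

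Once $\bar u_0$ is a viscosity (hence classical) solution of the Neumann problem in $B_{1/2}$ with $|\bar u_0| \le 1$ and $\bar u_0(0)=0$, standard regularity for \eqref{Ne} — reflect evenly across $\{x_n=0\}$ to get a harmonic function on the full ball — gives $|\bar u_0(x) - \nabla' \bar u_0(0)\cdot x'| \le C|x|^2$ near $0$, where $\nabla'$ is the tangential gradient. Setting $\xi := \nabla'\bar u_0(0) \in \R^{n-1}$, for $\eta$ chosen small depending only on $n$ and $\alpha$ (so that $C\eta^2 \le \tfrac14 \eta^{1+\alpha}$, say), and $k$ large, the Hausdorff convergence of graphs upgrades this to $|u_k - (x_n + \eps_k \xi\cdot x')| \le \tfrac12 \eps_k \eta^{1+\alpha}$ in $B_\eta$. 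Finally one absorbs the linear correction into a rotation: the unit vector $\nu := (\eps_k \xi, 1)/\sqrt{1+\eps_k^2|\xi|^2}$ satisfies $|(x\cdot\nu)^+ - (x_n + \eps_k\xi\cdot x')| \le C\eps_k^2|\xi|^2 \le \tfrac12\eps_k\eta^{1+\alpha}$ in $B_\eta$ (using $|\xi| \le C(n)$ from the gradient bound on $\bar u_0$) provided $\eps_k$ is small, yielding $|u_k - (x\cdot\nu)^+| \le \eps_k\eta^{1+\alpha}$ in $B_\eta$ and contradicting the choice of the sequence.

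I expect the main obstacle to be the compactness/convergence step, specifically making rigorous the Hausdorff convergence of the free boundaries together with the graphs of $\bar u_k$, and checking that the limit boundary data and the limit function glue into a genuine viscosity solution of \eqref{Ne} up to $\{x_n=0\}$ — this requires the nondegeneracy-type control near $F(u_k)$ and a careful use of (P1) all the way to the free boundary, rather than just in the interior. The barrier construction for the Neumann condition via (P2) is delicate in bookkeeping (the quadratic must have $\triangle P \gtrsim \delta\eps_k$ and, where $u_k=0$, gradient strictly above $1+\delta\eps_k$, which forces the sign condition $\partial_n Q<0$ and a small perturbation of $Q$ by a definite-sign quadratic term), but it is structurally identical to \cite{D}, so the novelty is only in invoking (P2) in place of the infinitesimal viscosity property.
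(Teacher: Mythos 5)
Your overall strategy is the same as the paper's: compactness of the $\eps$-scaled graphs $\bar u_k$ via (P1), identification of the Hölder limit $\bar u_0$ as a viscosity solution of the linearized Neumann problem \eqref{Ne} via (P2), and then $C^2$ estimates for \eqref{Ne} plus a small rotation to get \eqref{eta}. The compactness and the final regularity/rotation steps are in order.

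However, there is a sign error in the boundary step that would make your (P2) application fail. For $P_k = x_n + \eps_k(Q + c_k)$ one has $\nabla P_k = e_n + \eps_k \nabla Q$, hence $|\nabla P_k| = 1 + \eps_k\partial_n Q + O(\eps_k^2)$. If, as you write, $\partial_n Q < 0$, then $|\nabla P_k| < 1 - c\eps_k$ near the touching point, and the hypothesis ``$|\nabla P| > 1+\delta\eps$ where $u_k=0$'' of the subsolution version of (P2) is \emph{not} satisfied; you cannot obtain a contradiction. The correct test, which is what the paper uses, is to touch $\bar u_0$ from below at $x_0 \in \{x_n=0\}$ with $\triangle Q>0$ and $\partial_n Q(x_0) > \delta > 0$; then $\partial_n P_k \ge 1 + \delta\eps_k$, so $|\nabla P_k| \ge 1 + \delta\eps_k$ everywhere. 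Moreover, since $\partial_n P_k>0$, $P_k$ is increasing in the $x_n$-direction, which is exactly what promotes the inequality $P_k \le u_k$ from $\overline{\{u_k>0\}}\cap B_{2\delta}(x_k)$ (where the graph-Hausdorff convergence gives it) to the full ball $B_\delta(x_k)$ including $\{u_k = 0\}$ --- a point you gloss over but which is needed to invoke (P2) as stated. Ruling out touching from below with $\partial_n Q > \delta$ (together with the mirror argument from above with $\partial_n Q<-\delta$ and $\triangle Q<0$) is what encodes $\partial_n \bar u_0 = 0$ in the viscosity sense; the sign $\partial_n Q<0$ you propose is in fact perfectly compatible with the Neumann condition, and no contradiction can be drawn from it. Your later remark that the construction ``forces the sign condition $\partial_n Q < 0$'' confirms this is not a typo but a real misorientation that needs to be fixed; once corrected, the argument coincides with the paper's.
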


\begin{proof}
The lemma follows by compactness.  
We argue by contradiction and produce sequences $\eps_k \to 0$ and a sequence of functions $u_k$ satisfying the assumptions but not the conclusion.
Let
$$\bar u_k:= \frac{1}{\eps_k} (u_k - x_n) \quad \mbox{in the set $\{ u_k>0\} \cap B_1$.} $$
Then if we choose $\eps_k \leq \eps_0(\delta_k)$ with $\delta_k=2^{-k}$, property (P1) together with the Ascoli-Arzela theorem guarantee that (up to a subsequence) the graphs of $\bar u_{k}$ converge in the Hausdorff distance to the graph of a H\"older function $\bar u_0$ defined in the half ball $B_{1/2}^+$.

Next we show that property (P2) implies that the function $\bar u_0$ satisfies 
\begin{equation*}\label{Ne1}\triangle \bar u_0 =0 \quad \text{in $B_{1/2}^+$}, \quad \quad \partial _n \bar u_0=0 \quad \mbox{on $\{x_n=0\} \cap B_{1/2}$},\end{equation*} in the viscosity sense. Indeed, if $Q$ is a quadratic polynomial with $\triangle Q>0$ that touches $\bar u_0$ by below at some point $x_0 \in B_{1/2}^+$ then $Q+c_k$ touches by below $\bar u_k$ by below at $x_k \to x_0$ in a fixed neighborhood of $x_0$. Thus we can find $\delta>0$ small such that $\|D^2 Q \| \le \delta^{-1}, \quad \triangle Q > \delta$ and
$$P_k:=x_n + \eps_k (Q + c_k) $$ touches $u_k$ by below at $x_k$ and it is below it in a $\delta$ neighborhood of $x_k$. Notice that $P_k>0$ in this neighborhood and thus we contradict property (P2) for all large $k$. 

If we touch $\bar u_0$ by below at some boundary point $x_0 \in B_{1/2} \cap \{x_n=0\}$ by a quadratic polynomial $Q$ that satisfies in addition $Q_n (x_0)>\delta$ then we argue as above and find that $P_k$ touches $u_k$ by below in $\overline{\{u_k >0\}} \cap B_{2 \delta} (x_0)$ at $x_k \to x_0$. Since $P_k$ is increasing in the $x_n$ direction we conclude that $P_k$ is below $u_k$ in a whole $\delta$-neighborhood of $x_k$ and we contradict (P2) again since $|\nabla P_k| \ge \partial _n P_k \ge 1 + \delta \eps_k$.  

Now the conclusion of the lemma follows easily from the $C^2$ estimates for solutions to the Neumann problem \eqref{Ne} for $\bar u_0$, since $\|\bar u_0\|_{L^\infty} \le 1$,
$$|\bar u_0 - l| \le \frac 12 \eta^{1+ \alpha} \quad \mbox {in $B^+_ \eta$,}$$
for some linear function $l$ and with $\eta>0$ small depending only on $\alpha$ and $n$.

\end{proof}
\subsection{Properties (P1) and (P2) are satisfied.} In order to establish Lemma \ref{limp5} it suffices to show that if $\sigma \le \eps^{n+4}$ then properties (P1) and (P2) hold.
For (P1), by Remark \ref{rem01} it suffices to consider the case $B_r(x_0)=B_1$ and replace $\sigma$ by $$\bar \sigma = r^{-n} \sigma \le  \delta^{-n}\eps^{n+4}.$$
Assume $a=0$ for simplicity. Lemma \ref{lhi} applies since $\bar \sigma \le (\gamma \eps)^{n+3}$ for $\gamma \ge \delta$ and $\eps$ is small depending on $\delta$. We obtain 
\begin{equation}\label{c0}u \ge x_n^+ + c \gamma \eps \quad \text{in $B_{3/4} \cap \{x_n \ge c_0(n)\}.$}\end{equation} Now we can use Lemma \ref{sub_property} (with $\mu=c \gamma \eps/4$) and show that $u$ must be greater than 
$$P=x_n + \frac c2 \gamma \eps (c_0+x_n+2n x_n^2 -|x'|^2),$$
in the cylinder $\mathcal C:=\{|x_n| \le 2 c_0, \quad |x'| \le1/4  \}$, thus the desired conclusion easily follows.
Indeed in view of Corollary \ref{co_1}, if $u \geq P$ in a neighborhood of $\p \mathcal C$ and $|\nabla P| > 1+ \mu$
then $u \geq P$ in $\mathcal C.$ In the region where $c_0 \leq x_n \leq 2 c_0$ the fact that $u \geq P$ immediately follows from \eqref{c0}, if $c_0$ is small enough depending on $n$. In the remaining region we use that $u \geq x_n$ and choose the neighborhood sufficiently small.

\medskip

Finally property (P2) follows from Lemma \ref{sub_property}, since after a rescaling of factor $\delta^{-1}$, the rescaled polynomial $\tilde P$ satisfies
$$\|D^2\tilde P\| \le \eps, \quad \triangle \tilde P \ge \delta^2 \eps, \quad \tilde P>0 \quad \text{or} \quad |\nabla \tilde P|\ge 1 + \delta \eps.$$
Now we take $\mu=\delta^2 \eps$ and clearly $\bar \sigma \le \mu^{n+3}$, if $\eps$ is small enough depending on $\delta.$

Finally, we sketch the proof of Theorem \ref{flat}.

\medskip

{\it Proof of Theorem $\ref{flat}.$} Since $u$ is an almost minimizer, then it satisfies \eqref{3000} with $\sigma= \kappa$. Given $\alpha \in (0,\beta/(n+4)]$, let $\eps_0$ depending on $K, \alpha$ be given by Lemma \ref{limp5} and take $\kappa \leq \eps_0^{n+4}.$ Then $u$ satisfies \eqref{eta} and rescaling
we obtain that $u_\eta(x)= u(\eta x)/\eta$ still satisfies \eqref{3000} with $\sigma=\kappa \eta^\beta$ and it is $\eps_0 \eta^\alpha$ flat. Thus we can apply Lemma \ref{limp5} again as long as $\kappa \eta^\beta \leq \eps_0^{n+4} \eta^{\alpha(n+4)}$, which holds in view of our choice of $\alpha$. We conclude that Lemma \ref{limp5} can be applied indefinitely and the theorem follows.\qed

\subsection{Regularity of the free boundary} From Proposition \ref{blowup}, we know that a Lipschitz almost minimizer with small constant $\kappa$ is well approximated by minimizers and this approximation holds also for the free boundaries. On the other hand, the free boundary of a minimizer consists of a singular part which is a closed set of Hausdorff dimension $n-5$, and a regular part which has finite $n-1$ dimension and is locally smooth \cite{AC, CJK, JS}. Thus, using Theorem \ref{flat} and a standard covering argument, we obtain the following result.

\begin{thm}
Let $u$ be a Lipschitz almost minimizer to $J$ in $B_1$ with exponent $\beta$ and sufficiently small constant $\kappa (K,\beta)$, where $K$ is a constant that bounds the Lipschitz norm of $u$. Then
$$\mathcal H^{n-1}(\p \{u>0\} \cap B_{1/2}) \le C, \quad \quad \mbox{with $C$ universal,}$$
and $\p \{u>0\}$ is smooth outside a closed singular set of Hausdorff dimension $n-5$. 
\end{thm}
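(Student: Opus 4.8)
The plan is to reduce the statement for general Lipschitz almost minimizers to the already-established local results: the improvement of flatness Theorem \ref{flat}, the blow-up/compactness Proposition \ref{blowup}, and the known structure theory for global minimizing cones. The target claims are two: the Hausdorff measure bound $\mathcal H^{n-1}(\partial\{u>0\}\cap B_{1/2})\le C$, and the decomposition of $F(u)$ into a smooth part and a closed singular set of dimension $n-5$.

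\medskip

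\noindent\textbf{Step 1: Flatness at small scales near ``good'' free boundary points.} Fix a free boundary point $x_0\in F(u)\cap B_{1/2}$. By Proposition \ref{blowup}, any blow-up of $u$ at $x_0$ converges (up to subsequence) to a global minimizing cone $u_0$ homogeneous of degree $1$, and the free boundaries converge in Hausdorff distance on compact sets. By the known regularity theory for global minimizers (\cite{AC,CJK,JS}), the singular set of $u_0$ has Hausdorff dimension $n-5$; in particular, when $n\le 4$ the only homogeneous minimizing cone is the half-plane solution $(x\cdot\nu)^+$. More generally, a dimension reduction (Federer-type) argument applied to the minimizing cones shows that the set of points $x_0\in F(u)$ at which \emph{some} blow-up is \emph{not} a half-plane solution is contained in a closed set $\Sigma$ of Hausdorff dimension $\le n-5$. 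At every point $x_0\in F(u)\setminus\Sigma$, there is a blow-up equal to $(x\cdot\nu)^+$; by the Hausdorff convergence of free boundaries and of the functions, for each $\eps_0>0$ there is a radius $r=r(x_0,\eps_0)>0$ with $|u(x_0+rx)/r-(x\cdot\nu)^+|\le\eps_0$ in $B_1$. After the rescaling $u_{x_0,r}(x):=u(x_0+rx)/r$, this function satisfies \eqref{3000} with $\sigma=\kappa r^\beta$, and by shrinking $r$ we also ensure $\sigma\le\eps_0^{n+4}$.

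\medskip

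\noindent\textbf{Step 2: Apply the improvement of flatness.} With $\eps_0=\eps_0(K,\beta)$ the threshold from Theorem \ref{flat}, Step 1 puts $u_{x_0,r}$ in the hypotheses of that theorem, so $F(u)$ is $C^{1,\alpha}$ in a neighborhood of $x_0$. Hence $F(u)\setminus\Sigma$ is a relatively open subset of $F(u)$ which is locally a $C^{1,\alpha}$ hypersurface; equivalently $F(u)$ is smooth (indeed $C^{1,\alpha}$) outside the closed set $\Sigma$ of Hausdorff dimension $n-5$. To make $\Sigma$ genuinely closed and of the right dimension one argues as in the minimizer case: the set where no blow-up is flat is closed by the upper semicontinuity of ``being a non-flat point'' under the Hausdorff convergence of Proposition \ref{blowup}, and its Hausdorff dimension is controlled by dimension reduction on the cones, exactly as in \cite{AC,CJK,JS}.

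\medskip

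\noindent\textbf{Step 3: The $\mathcal H^{n-1}$ bound via covering.} For the measure estimate, cover $F(u)\cap B_{1/2}$. Near a regular point, Theorem \ref{flat} gives that $F(u)$ is a $C^{1,\alpha}$ graph at a definite scale, and non-degeneracy (Lemma \ref{snd}) together with the Lipschitz bound gives that $|\{u>0\}\cap B_\rho(x_0)|$ and $|\{u=0\}\cap B_\rho(x_0)|$ are each comparable to $\rho^n$ for $x_0\in F(u)$, $\rho$ small — i.e. $F(u)$ has positive density from both sides, so $F(u)$ is locally Ahlfors-regular. A standard Vitali covering argument, using the flatness provided by Theorem \ref{flat} at a uniform scale away from $\Sigma$ and the fact that $\Sigma$ has dimension $n-5<n-1$ (so contributes nothing to $\mathcal H^{n-1}$), yields $\mathcal H^{n-1}(F(u)\cap B_{1/2})\le C(n)$. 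Alternatively, one invokes the Hausdorff convergence of free boundaries in Proposition \ref{blowup} together with the known $\mathcal H^{n-1}$ bound for minimizers to transfer the estimate.

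\medskip

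\noindent\textbf{Main obstacle.} The routine parts are the covering argument and the reduction to Theorem \ref{flat}; the genuine point requiring care is \emph{uniformity of the scale $r(x_0,\eps_0)$ and the dimension reduction for $\Sigma$}. A priori the radius at which $u_{x_0,r}$ becomes $\eps_0$-flat depends on $x_0$ through the blow-up, so one must argue by compactness that off a small neighborhood of $\Sigma$ this radius can be taken uniform — this is where Proposition \ref{blowup} (uniform convergence to minimizing cones, Hausdorff convergence of free boundaries) is used essentially, mirroring the minimizer theory. Equivalently, one phrases the whole argument as: a Lipschitz almost minimizer with small $\kappa$ is, at small scales, as close as we like to a genuine minimizer, both in $L^\infty$ and in the Hausdorff distance of free boundaries, so the partial regularity and the $\mathcal H^{n-1}$ bound descend from \cite{AC,CJK,JS} via the standard covering and dimension-reduction machinery.
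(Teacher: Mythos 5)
Your proposal matches the paper's proof, which combines Proposition \ref{blowup} (blow-ups converge to global minimizing cones, with Hausdorff convergence of the free boundaries), the known structure and dimension-reduction theory for minimizers from \cite{AC,CJK,JS}, the improvement-of-flatness Theorem \ref{flat}, and a standard covering argument — exactly the chain you lay out. One caution in Step 3: the clause ``positive density from both sides, so $F(u)$ is locally Ahlfors-regular'' is a non sequitur, since two-sided Lebesgue density of $\{u>0\}$ and $\{u=0\}$ at free boundary points does not by itself yield $\mathcal H^{n-1}$-Ahlfors regularity of the boundary; but this is not load-bearing, because you immediately fall back on the $C^{1,\alpha}$ graph structure given by Theorem \ref{flat} off $\Sigma$ together with $\dim_{\mathcal H}\Sigma = n-5 < n-1$, which is the mechanism the paper actually uses.
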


In the general case we obtain that $$\mathcal H^{n-1}\{ \p \{u>0\} \cap B_{1/2}) \le C(\kappa, \beta,\|u\|_{L^\infty(B_1)}),$$  and $\p \{u>0\}$ is smooth outside a closed singular set of Hausdorff dimension $n-5$. 

We also state the result of \cite{C2,D} about the regularity of Lipschitz free boundaries for the case of almost minimizers. In view of the results in this section, the proof follows with the strategy of \cite{D}.

\begin{thm}\label{thm2} Let $u$ be an almost minimizer in $B_1$ with exponent $\beta$ and constant $\kappa$. Assume that $0 \in F(u)$ and that $F(u)$ is a Lipschitz graph with Lipschitz constant $L.$ Then $F(u) \cap B_{1/2}$ is a $C^{1,\alpha}$ graph, and its $C^{1,\alpha}$ norm is bounded by a constant that depends only on $n,L$ and $\beta$ and $\kappa$.\end{thm}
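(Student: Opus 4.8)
The plan is to reduce Theorem \ref{thm2} to the corresponding result for viscosity solutions proved in \cite{D}, using the machinery of subsection 4.1 as the bridge. The key observation is that the subsolution and supersolution lemmas (Lemma \ref{sub_property}, Lemma \ref{sup_property}) and the resulting comparison principle (Corollary \ref{co_1}) give exactly the non-infinitesimal viscosity property that the arguments of \cite{D} require: comparison with classical sub/supersolutions holds in a neighborhood of a contact point whose size is controlled by $\mu$ (the strict sub/superharmonicity and gradient defect of the test polynomial), provided the almost-minimality defect $\sigma$ satisfies $\sigma \le \mu^{n+3}$. Since $u$ is an almost minimizer, after an initial dilation it satisfies \eqref{3000} with $\sigma = \kappa$ arbitrarily small, and by Theorem \ref{LIP} (or the remark following Proposition \ref{blowup}) it is Lipschitz with a controlled constant $K$ near $F(u)$.

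First I would set up the geometry: since $0 \in F(u)$ and $F(u)$ is a Lipschitz graph with constant $L$ in $B_1$, I would use the non-degeneracy (Lemma \ref{snd}) and Lipschitz continuity to see that, in a neighborhood of $0$, $u$ is trapped between two translated cones of opening controlled by $L$. This is the ``flatness at large scale'' that initiates the iteration. Then I would run the improvement-of-flatness scheme of \cite{D} adapted to the Lipschitz (rather than flat) setting: one shows that the $\eps$-scaled functions $\bar u_\eps = \eps^{-1}(u - l)$, where $l$ is the current approximating cone, satisfy the Harnack-type property (P1) and the viscosity property (P2) — these follow from Lemma \ref{lhi} and Lemma \ref{sub_property}/\ref{sup_property} exactly as in subsection 4.3 — and hence the graphs of $\bar u_\eps$ are compact and their limits solve a linearized Neumann problem in a Lipschitz domain. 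The $C^{1,\alpha}$ (indeed Schauder) estimates for that linearized problem give an improvement of the Lipschitz constant at one fixed smaller scale, which can be iterated geometrically because at scale $\eta^k$ the almost-minimality defect is $\kappa \eta^{k\beta}$, decaying fast enough relative to the flatness $\eps_k$ as long as $\alpha$ is chosen small compared to $\beta$ (of order $\beta/(n+4)$, as in Theorem \ref{flat}).

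The iteration then yields that $F(u)$ becomes flatter and flatter at $0$, and more generally at every free boundary point in $B_{1/2}$ (with uniform constants), so Theorem \ref{flat} applies and $F(u) \cap B_{1/2}$ is $C^{1,\alpha}$; a standard argument recovers the quantitative bound on the $C^{1,\alpha}$ norm in terms of $n$, $L$, $\beta$, $\kappa$ by tracking the dependence of $\eps_0$, $\eta$, and the number of initial iterations on these parameters. Throughout, the ``Lipschitz implies Lipschitz with smaller constant'' step — the non-flat analogue of Lemma \ref{limp5} — is where the geometry of Lipschitz domains enters and where one must be careful that the competitor/barrier polynomials used in (P2) have the right gradient on the correct side of the approximating plane.

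The main obstacle I expect is precisely this adaptation of the linearization argument from the flat regime to the Lipschitz-graph regime: in the flat case the limiting profile satisfies a Neumann problem on a flat half-ball with clean $C^2$ estimates, whereas for a Lipschitz free boundary the linearized problem lives on a Lipschitz subgraph domain, so one needs the correct boundary Harnack / Schauder estimate there, and one must verify that the compactness furnished by (P1) is strong enough to pass to the limit in that domain (this is the heart of the approach in \cite{C2,D}). Since the excerpt explicitly states ``the proof follows with the strategy of \cite{D}'' once the results of this section are in hand, the contribution here is to check that Lemma \ref{lhi} and Lemmas \ref{sub_property}–\ref{sup_property}, with the bookkeeping $\sigma \le \mu^{n+3}$ and $\sigma_\rho = \rho^{-n}\sigma$, supply every viscosity ingredient that \cite{D} uses infinitesimally — and this is exactly what subsections 4.1–4.3 were designed to provide.
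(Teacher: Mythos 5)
Your proposal matches the paper's approach: the paper gives no detailed proof of Theorem \ref{thm2}, only the remark that ``the proof follows with the strategy of \cite{D}'' once Lemmas \ref{sub_property}, \ref{sup_property} and Corollary \ref{co_1} furnish the non-infinitesimal comparison principle. Your outline is a correct expansion of exactly that remark, identifying (P1)/(P2) as the viscosity ingredients \cite{D} needs and tracking the bookkeeping $\sigma \le \mu^{n+3}$, $\sigma_\rho = \rho^{-n}\sigma$, and $\alpha$ of order $\beta/(n+4)$ that makes the iteration close.
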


\end{document}